\newcommand{\Dchaintwo}[4]{
\rule[-3\unitlength]{0pt}{8\unitlength}
\begin{picture}(14,5)(0,3)
\put(1,2){\ifthenelse{\equal{#1}{l}}{\circle*{2}}{\circle{2}}}
\put(2,2){\line(1,0){10}}
\put(13,2){\ifthenelse{\equal{#1}{r}}{\circle*{2}}{\circle{2}}}
\put(1,5){\makebox[0pt]{\scriptsize #2}}
\put(7,4){\makebox[0pt]{\scriptsize #3}}
\put(13,5){\makebox[0pt]{\scriptsize #4}}
\end{picture}}
\newcommand\ord{{\operatorname{ord}}}
\newcommand{\BV}{\mathfrak{B}(V)}
\newcommand\Z{\mathbb{Z}}
\newcommand\Ly{\mathcal{L}}
\newcommand\SupW{[\Ly]^{(\N)}}
\newcommand\SupWL{[L]^{(\N)}}
\newcommand\N{\mathbb{N}}
\renewcommand{\k}{\Bbbk}
\newcommand\G{\Gamma}
\newcommand\Gh{\widehat{\Gamma}}
\renewcommand\o{\otimes}       
\newcommand{\la}{\langle}      \newcommand{\ra}{\rangle}
\newcommand{\red}[1]{c_{ {#1}}}     
\newcommand{\redbr}[2]{c_{{(#1|#2)}}}
\newcommand{\redh}[1]{d_{{#1}}} 
\newcommand\kX{\k\la X\ra}
\newcommand\X{\la X\ra}
\newcommand\Sh[3]{\mathrm{Sh}(#1)=(#2|#3)}
\newcommand\nSh[3]{\operatorname{Sh}(#1)\neq(#2|#3)}
\numberwithin{equation}{section}
\theoremstyle{plain}
\newtheorem{thm}{Theorem}[section]
\newtheorem{cor}[thm]{Corollary}
\newtheorem{lem}[thm]{Lemma}
\newtheorem{prop}[thm]{Proposition}
\theoremstyle{definition} 
\newtheorem{defn}[thm]{Definition}
\newtheorem{rem}[thm]{Remark}
\newtheorem{exs}[thm]{Examples}
\newtheorem{ex}[thm]{Example}
\title{On the presentation of pointed Hopf algebras\footnote{This work is part of the author's PhD thesis written under the supervision of Professor H.-J.~Schneider.}}
\author{Michael Helbig\footnote{eMail: \texttt{michael@helbig123.de}}}
\date{\today}
\begin{document}

\maketitle

\begin{abstract}
\noindent
We give a presentation in terms of generators and relations of Hopf algebras generated by skew-primitive elements and abelian group of group-like elements with action given via characters. This class of pointed Hopf algebras has shown great importance in the classification theory and can be seen as generalized quantum groups.
As a consequence we get an analog presentation of Nichols algebras of diagonal type.
\newline

\noindent
\textsc{Key Words:} Hopf algebra, Nichols algebra, quantum group
\end{abstract}

\section*{Introduction}
Many famous examples of Hopf algebras generated by skew-primitive elements and abelian group of group-like elements with action given via characters are known: the universal enveloping algebras $U(\mathfrak g)$ of  a semi-simple complex Lie algebra $\mathfrak g$, their $q$-deformations  $U_q (\mathfrak g)$ the quantum groups of Drinfel'd and Jimbo, and the small quantum groups $u_q (\mathfrak g)$ of Lusztig \cite{Luszt1,Luszt2}, also called Frobenius-Lusztig kernels.
These are all pointed, i.e., all its simple subcoalgebras are one-dimensional, or equivalently, the coradical equals the group algebra of the group of group-like elements. 

Moreover, this class of  pointed Hopf algebras is very important in the classification theory: It is conjectured that any finite-dimensional pointed Hopf algebra over the complex numbers with abelian coradical is of that type. Recently Andruskiewitsch and Schneider  \cite{AS-Class} have proven this with some restriction on the order of the group of group-like elements. Further it is true in other special cases: for cocommutative Hopf algebras we have the Cartier-Kostant-Milnor-Moore theorem of around 1963, if the Hopf algebra has rank one see \cite{Krop2006214}, and if the dimension of the Hopf algebra is some power of a prime see  \cite{AS-p3, AS-CartMatr, AS-A2,Grana-p5}. However, the setting in the general situation is more complicated and new phenomena appear; for concrete examples see \cite{Helbig-PhD, Helbig-Lift}.

In  Theorem \ref{PropIdealCharHopfAlg} we give a structural description of Hopf algebras generated by skew-primitive elements and abelian group of group-like elements with action given via characters, in terms of generators and relations - we also get an analog statement for Nichols algerbas of diagonal type: At first we define a smash product of a free algebra and a group algebra, which is the prototype of these Hopf algebras. Then we show that any such Hopf algebra is a quotient of this prototype. The main point is the construction of the ideal, where
we use the theory of Lyndon words. We then use a result by Kharchenko \cite{KhPBW}. Also the generators of the ideal build up a Gr\"{o}bner basis, see also \cite{Helbig-PBW}. The knowledge of this presentation is important, if one wants to determine the liftings of Nichols algebras of diagonal type \cite{Helbig-Lift}, which is part of the lifting method by Andruskiewitsch and Schneider \cite{AS-p3}.

The paper is organized as follows: In Section \ref{SectqCommutCalc} we recall the general calculus for $q$-commutators in an arbitrary algebra of \cite{Helbig-PBW}.
Then in Section \ref{SectLyndW} we give a short account to the theory of Lyndon words, super letters and super words; super letters are iterated $q$-commutators and super words are products of super letters. We show that the set of all super words can be seen indeed as a set of words, i.e., as a free monoid. This is a consequence of Proposition \ref{LemSupWordsUnique}. Finally, we formulate in Section 3 our main result, and give some applications and classical examples in Section 4; more complicate examples are found in \cite{Helbig-PhD, Helbig-Lift}.

Throughout the paper let $\k$ denote a field  of arbitrary characteristic $\operatorname{char}\k=p\ge 0$, unless stated otherwise.
    \section{$q$-commutator calculus}\label{SectqCommutCalc}
In this section let $A$ denote an arbitrary algebra over $\k$.  For all $a,b\in A$ and $q\in \k$ we define the \emph{$q$-commutator}
$$
[a,b]_{q} := ab-qba.
$$
The $q$-commutator is bilinear. If $q=1$ we get the classical commutator of an 
algebra. If $A$ is graded and $a,b$ are homogeneous 
elements, then there is a natural choice for the $q$. 
We are interested in the following special case:
\begin{ex}\label{qCommutExkX}\label{qCommutEx} 
Let $\theta\ge 1$, $X=\{ x_1,\ldots,x_{\theta}\}$, $\X$ the free monoid and $A=\kX$ the free $\k$-algebra. For an abelian group $\G$ let $\widehat{\G}$ be the character group, $g_1,\ldots,g_{\theta}\in\G$ and $\chi_1,\ldots,\chi_{\theta}\in \widehat{\G}$. If we define the two monoid maps 
$$
\deg_{\G}:\X\rightarrow \G,\ \deg_{\G}(x_i):=g_i\quad\text{and}\quad \deg_{\Gh}:\X\rightarrow \Gh,\ \deg_{\Gh}(x_i):=\chi_i,
$$ 
for all $1\le i\le\theta$, then $\kX$ is $\G$- and $\Gh$-graded. 
Let $a\in \kX$ be $\G$-homogeneous and $b\in \kX$ be $\widehat{\G}$-homogeneous. We set 
$$
g_a:=\deg_{\G}(a), \quad \chi_b:=\deg_{\Gh}(b),\quad\text{and}\quad q_{a,b}:=\chi_b(g_a).
$$ 
Further we define $\k$-linearly on $\kX$ the $q$-commutator
\begin{align}\label{qCommutExkXDefn}
 [a,b] := [a,b]_{q_{a,b}}.
\end{align}
Note that $q_{a,b}$ is a bicharacter on the homogeneous elements
and depends only on the values 
$$
q_{ij}:=\chi_j(g_i)\text{ with }1\le i,j\le\theta.$$
For example $[x_1,x_2]=x_1x_2-\chi_2(g_1)x_2x_1=x_1x_2-q_{12}x_2x_1$. Further if $a,b$ are $\Z^{\theta}$-homogeneous they are both $\G$- and $\Gh$-homogeneous. In this case we can build iterated $q$-commutators, like $\bigl[x_1,[x_1,x_2]\bigr] =x_1[x_1,x_2]-\chi_1\chi_2(g_1)[x_1,x_2]x_1= x_1[x_1,x_2]-q_{11}q_{12}[x_1,x_2]x_1$.
\end{ex}

Later we will deal with algebras which still are  $\widehat{\G}$-graded,  but not $\G$-graded such that Eq.~\eqref{qCommutExkXDefn} is not well-defined. However, the $q$-commutator calculus, which we next want to develop, will be a major tool for our calculations such that we need the general definition with the $q$ as an index.

\begin{prop}\emph{\cite[Prop.~1.2]{Helbig-PBW}}\label{PropqCommut} For all $a,b,c,a_i,b_i\in A$, $q,q',q'',q_i\in \k$ with $1\le i\le n$ we have:

\emph{(1) $q$-derivation properties:}
\begin{align*}
   &[a,bc]_{qq'}=[a,b]_{q} c + q b [a,c]_{q'}, \qquad
   [ab,c]_{qq'}=a [b,c]_{q'} + q' [a,c]_{q} b,\\
   &[a,b_1\ldots b_n]_{q_1\ldots q_n}=\sum_{i=1}^{n}q_1\ldots q_{i-1} b_1\ldots b_{i-1}[a,b_i]_{q_i}b_{i+1}\ldots b_n,\\ 
   &[a_1\ldots a_n,b]_{q_1\ldots q_n}=\sum_{i=1}^{n}q_{i+1}\ldots q_{n} a_1\ldots a_{i-1}[a_i,b]_{q_i}a_{i+1}\ldots a_n.
\end{align*}

\emph{(2) $q$-Jacobi identity:}
\begin{align*}
   \bigl[[a,b]_{q'}, c\bigr]_{q''q}&=\bigl[a,[b,c]_{q}\bigr]_{q'q''} -q' b [a,c]_{q''}+ q[a,c]_{q''} b.
\end{align*}
\end{prop}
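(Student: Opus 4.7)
The identities in (1) and (2) are formal algebraic consequences of the definition $[a,b]_q=ab-qba$, so my plan is to verify them by direct expansion, in the order presented, treating each later identity as built from the earlier ones.

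I would begin with the two basic $q$-derivation identities in (1). For the first, expanding gives $[a,bc]_{qq'}=abc-qq'bca$, while
\[
[a,b]_q c + qb[a,c]_{q'} = (ab-qba)c + qb(ac-q'ca) = abc - qbac + qbac - qq'bca,
\]
and the two middle terms cancel. The symmetric identity $[ab,c]_{qq'}=a[b,c]_{q'}+q'[a,c]_q b$ is verified in exactly the same way: both sides reduce to $abc-qq'cab$.

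Next, I would pass from two factors to $n$ factors by induction on $n$. For the first $n$-fold formula, write $b_1\cdots b_n = b_1(b_2\cdots b_n)$, apply the two-term identity with the splitting $q = q_1$, $q' = q_2\cdots q_n$, and invoke the induction hypothesis on $[a,b_2\cdots b_n]_{q_2\cdots q_n}$; the prefactor $q_1 b_1$ combines with the induction-hypothesis prefactors to produce the stated $q_1\cdots q_{i-1}b_1\cdots b_{i-1}$ weights. The $n$-fold version of the second identity is handled analogously, splitting off the \emph{last} factor.

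Finally, for the $q$-Jacobi identity in (2), rather than a bare expansion, I would apply the second two-term identity of (1) twice to $[[a,b]_{q'},c]_{q''q} = [ab,c]_{q''q} - q'[ba,c]_{q''q}$. Using $[ab,c]_{q''q}=a[b,c]_q+q[a,c]_{q''}b$ and, by commutativity of scalars in $\k$ (so $q''q=qq''$), also $[ba,c]_{q''q}=b[a,c]_{q''}+q''[b,c]_q a$, the right-hand side becomes
\[
a[b,c]_q - q'q''[b,c]_q a + q[a,c]_{q''}b - q'b[a,c]_{q''},
\]
and the first two terms regroup as $[a,[b,c]_q]_{q'q''}$, giving exactly the claimed identity. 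The only obstacle is bookkeeping: keeping track of which scalar multiplies which monomial and verifying that the index of each $q$-commutator matches the product of the $q$'s appearing in the surrounding coefficients; no conceptual step is required beyond the definition and the commutativity of $\k$.
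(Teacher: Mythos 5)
Your proof is correct: all the expansions check out (both sides of the two basic identities reduce to $abc-qq'bca$ resp.\ $abc-qq'cab$, the inductions for the $n$-fold formulas close properly, and the regrouping $a[b,c]_q-q'q''[b,c]_q a=[a,[b,c]_q]_{q'q''}$ in the Jacobi step is exactly right). The paper itself gives no proof, citing \cite[Prop.~1.2]{Helbig-PBW} instead; your direct verification from the definition $[a,b]_q=ab-qba$ is the standard argument one would expect there, so there is nothing substantive to compare.
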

\begin{rem}
If we are in the situation of Example \ref{qCommutExkX} and assume that the elements are homogeneous, we can replace the arbitrary commutators by Eq.~\eqref{qCommutExkXDefn} and also
replace the general $q$'s above in the obvious way; e.g., in the first one of (1) set $q=q_{a,b}$, $q'=q_{a,c}$ and in (3), (4) $\zeta=q_{b,b}$ resp.~$\zeta=q_{a,a}$.
\end{rem}

\section{Lyndon words and $q$-commutators}\label{SectLyndW}

In this section we recall the theory of Lyndon words \cite{Lothaire,Reut} as far as we are concerned and then introduce the notion of super letters and super words \cite{KhPBW}. We want to emphasize that the set of all super words can be seen indeed as a set of words (more exactly as a free monoid, see \cite{Helbig-PBW}), which is a consequence of Proposition \ref{LemSupWordsUnique}. 
Moreover, we introduce a well-founded ordering  of the super words.

\subsection{Words and the lexicographical order}
Let $\theta\ge 1$, $X=\{x_1,x_2,\ldots,x_{\theta}\}$ be a finite totally ordered set by $x_1<x_2<\ldots <x_{\theta}$, and $\X$ the free monoid; we think of $X$ as an alphabet and of $\X$ as the words in that alphabet including the empty word $1$. For a word $u=x_{i_1}\ldots x_{i_n}\in\X$ we define $\ell(u):=n$ and call it the \emph{length} of $u$. 

The \emph{lexicographical order} $\le$ on $\X$ is defined for $u,v\in\X$ by $u<v$ if and only if 
either $v$ begins with $u$, i.e., $v=uv'$ for some $v'\in\X\backslash\{1\}$, or if there are $w,u',v'\in \X$, $x_i,x_j\in X$ 
such that $u=wx_iu'$, $v=wx_jv'$ and $i<j$. E.g., $x_1<x_1x_2<x_2$. 
This  order $<$ is stable by left, but in general not stable by right multiplication: $x_1<x_1x_2$ but $x_1x_3>x_1x_2x_3$. Still we have:
\begin{lem}\label{lemord}
Let $v,w\in\X$ with $v<w$. Then:
\begin{enumerate}
\item[\emph{(1)}] $uv<uw$ for all $u\in\X$.
\item[\emph{(2)}] If $w$ does not begin with $v$, then $vu<wu'$ for all $u,u'\in\X$.
\end{enumerate}
\end{lem}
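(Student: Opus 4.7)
Both statements will follow by a direct case analysis on the two possibilities in the definition of the lexicographical order: either $v$ is a proper prefix of $w$ (call this Case A), or $v$ and $w$ share a common prefix $w'$ after which they differ in a letter, say $v = w'x_i v_1$, $w = w'x_j w_1$ with $i<j$ (Case B). Essentially nothing more than chasing these two patterns through a left, respectively right, multiplication is needed.

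For part (1), I fix $u\in\X$ and split along the dichotomy. In Case A one has $w = vv'$ with $v'\neq 1$, so $uw = (uv)v'$ and hence $uv$ is a proper prefix of $uw$, giving $uv < uw$ by the first alternative of the definition. In Case B one has $uv = (uw')x_i v_1$ and $uw = (uw')x_j w_1$; the common prefix is now $uw'$, the differing letters are still $x_i$ and $x_j$ with $i<j$, so $uv < uw$ by the second alternative. This is the instability-by-left-multiplication statement, and it is immediate once the two cases are recorded.

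For part (2), the hypothesis that $w$ does not begin with $v$ rules out Case A, so we are automatically in Case B, with $v = w'x_i v_1$, $w = w'x_j w_1$ and $i<j$. Multiplying $v$ on the right by $u$ and $w$ on the right by $u'$ yields $vu = w'x_i (v_1 u)$ and $wu' = w'x_j (w_1 u')$. The common prefix $w'$ and the comparison $i<j$ are untouched by this right multiplication, so the second alternative of the definition applies verbatim and gives $vu < wu'$.

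The only subtlety worth flagging is the asymmetry that makes (2) need its extra hypothesis: Case A is precisely the obstruction to right-stability (the standard example $x_1<x_1x_2$ becomes $x_1x_3>x_1x_2x_3$ because $w$ begins with $v$), and excluding it is exactly what the hypothesis of (2) does. Beyond that, there is no real obstacle; the proof is a bookkeeping verification of the two defining clauses.
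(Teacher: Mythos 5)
Your proof is correct: the two-case analysis on the defining clauses of the lexicographical order is exactly the routine verification needed, and both cases are handled properly (in particular you correctly observe that the hypothesis in (2) eliminates the prefix case, which is the sole obstruction to right-stability). The paper states this lemma without proof, treating it as a standard fact, so there is nothing to compare against; your argument is the evident intended one.
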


\subsection{Lyndon words and the Shirshov decomposition}
A word $u\in\X$ is called a \emph{Lyndon word} if $u\neq 1$ and $u$ is smaller than any of its proper endings, i.e., for all $v,w\in\X\backslash\{1\}$ such that $u=vw$ we have $u<w$. We denote by 
$$
\Ly:=\{u\in\X\,|\, u \text{ is a Lyndon word}\}
$$ the set of all Lyndon words. For example $X\subset\Ly$, but $x_i^n\notin \Ly$ for all $1\le i\le \theta$ and $n\ge 2$. 
Moreover, if $i<j$ then
$x_i^n x_j^m\in\Ly$ for $n,m\ge 1$, e.g. $x_1x_2$, $x_1x_1x_2$, $x_1x_2x_2$, 
 $x_1x_1x_2x_2$; also 
$x_i(x_ix_j)^n\in\Ly$ for any $n\in\N$, e.g. $x_1x_1x_2$, $x_1x_1x_2x_1x_2.$

For any $u\in\X\backslash X$ we call the decomposition $u=vw$ with $v,w\in \X\backslash\{1\}$ such that $w$ is the minimal (with respect to the lexicographical order) ending  the \emph{Shirshov decomposition} of the word $u$. We will write in this case $$\Sh{u}{v}{w}.$$ E.g., $\Sh{x_1x_2}{x_1}{x_2}$, $\Sh{x_1x_1x_2x_1x_2}{x_1x_1x_2}{x_1x_2}$, $\nSh{x_1x_1x_2}{x_1x_1}{x_2}$.

If $u\in\Ly\backslash X$, this is equivalent to $w$ is the longest proper ending of $u$ such that $w\in\Ly$. Moreover we have another characterization of the Shirshov decomposition of Lyndon words:

\begin{thm}\label{ShirChar}\emph{\cite[Prop. 5.1.3, 5.1.4]{Lothaire}}
Let $u\in\X\backslash X$ and $u=vw$ with $v,w\in \X$. Then the following are equivalent:
\begin{enumerate}
 \item[\emph{(1)}]  $u\in\Ly$ and $\Sh{u}{v}{w}$.
 \item[\emph{(2)}]  $v,w\in\Ly$ with $v<u<w$ and either $v\in X$ or else if $\Sh{v}{v_1}{v_2}$ then $v_2\ge w$.
\end{enumerate}
\end{thm}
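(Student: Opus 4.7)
My plan is to prove both implications by repeated appeal to the Lyndon property (a Lyndon word is strictly smaller than each of its nonempty proper suffixes) and to the defining property of the Shirshov decomposition ($w$ is the lex-minimal nonempty proper suffix of $u$). The key observation is that the nonempty proper suffixes of $u=vw$ fall into two families: the nonempty suffixes of $w$, and the words of the form $v''w$ with $v''$ a nonempty proper suffix of $v$; most steps reduce to a case analysis on these two families, using Lemma~\ref{lemord} to propagate lex-inequalities through concatenations.

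For $(1)\Rightarrow(2)$, the inequalities $v<u<w$ are immediate ($v$ is a proper prefix of $u$, and $w$ a nonempty proper suffix of the Lyndon word $u$). The membership $w\in\Ly$ follows by noting that any nonempty proper suffix $w'$ of $w$ is also a nonempty proper suffix of $u$ distinct from $w$, so $w<w'$ by minimality. For $v\in\Ly$ I would argue by contradiction: suppose $v=ab$ with $a,b\ne 1$ and $ab\ge b$. If $b$ is a proper prefix of $ab$, write $ab=bt$ with $t\ne 1$, so $u=btw$; then $tw$ is a nonempty proper suffix of $u$, and from the Lyndon inequality $u<bw$ one extracts $tw<w$, contradicting $w\le tw$. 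Otherwise $ab$ and $b$ disagree at some position $\le|b|$, and the same disagreement yields $abw>bw$, contradicting that $u$ is Lyndon. Finally, to show $v_2\ge w$ when $v\notin X$, I would assume $v_2<w$ and split: either $v_2$ is a proper prefix of $w$ (write $w=v_2 s$; $w\in\Ly$ gives $w<s$, hence $v_2 w<w$) or $v_2$ and $w$ disagree at some position $\le|v_2|$ (which directly gives $v_2 w<w$); either case contradicts $w\le v_2 w$.

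For $(2)\Rightarrow(1)$, a short case split on $v<w$ (prefix versus differing-position), combined with $w\in\Ly$, produces $u<w$; the inequality $v<u$ is trivial. To see $u\in\Ly$ I test each nonempty proper suffix: suffixes of $w$ are $\ge w>u$; a suffix $v''w$ with $v''$ a nonempty proper suffix of $v$ satisfies $v<v''$ by the Lyndon property of $v$, and since $|v|>|v''|$ this disagreement must occur at a position $\le|v''|$, which lifts to $vw<v''w$. For $\Sh{u}{v}{w}$ I verify that $w$ is the minimal nonempty proper suffix of $u$: suffixes of $w$ are all $\ge w$, and for a suffix $v''w$ the hypothesis $v_2\ge w$ (vacuous when $v\in X$) combined with $v''\ge v_2$ (by minimality of $v_2$ in $v$) gives $v''\ge w$, from which a short case analysis yields $v''w>w$.

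The main obstacle, as I see it, is the step ``$v\in\Ly$'' in $(1)\Rightarrow(2)$: the naive hope that $ab\ge b$ transfers to $abw\ge bw$ fails precisely when $b$ is a prefix of $ab$, and one has to exploit the self-overlap $ab=bt$ to bring the minimality of $w$ into play via the intermediate suffix $tw$. The remainder of the argument is a routine case analysis on the two suffix families described in the first paragraph.
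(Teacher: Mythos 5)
Your proposal is correct, but note that the paper does not actually prove this theorem --- it is quoted from Lothaire (Prop.\ 5.1.3, 5.1.4) without proof, so there is no in-paper argument to compare against. Your self-contained plan is the standard one and is sound: the reduction of all checks to the two suffix families (suffixes of $w$, and $v''w$ for $v''$ a nonempty proper suffix of $v$) is exactly the right organizing principle, and you correctly identified and handled the one genuinely delicate point, namely that in proving $v\in\Ly$ from (1) the inequality $ab\ge b$ does not transfer to $abw\ge bw$ when $b$ is a prefix of $ab$, forcing the detour through the self-overlap $ab=bt$ and the intermediate suffix $tw$ together with the minimality of $w$. Two minor remarks: the derivation of $u<w$ in $(2)\Rightarrow(1)$ is redundant, since $v<u<w$ is already part of hypothesis (2) (though your derivation from $v<w$ and $w\in\Ly$ is valid and shows that part of the hypothesis is superfluous); and the left-cancellation step $btw<bw\Rightarrow tw<w$ is not literally Lemma \ref{lemord} but its (true and elementary) converse, which you should state explicitly in a full write-up.
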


With this property we see that any Lyndon word is a product of two other Lyndon words of smaller length. Hence we get every Lyndon word by starting with $X$ and concatenating inductively each pair of Lyndon words $v,w$ with $v<w$.
\begin{defn}
We call a subset $L\subset \Ly$  \emph{Shirshov closed}
if  $X\subset L$, and  for all $u\in L$ with $\Sh{u}{v}{w}$ also $v,w\in L$.
\end{defn}

For example $\Ly$ is Shirshov closed, and if $X=\{x_1,x_2\}$, then $\{x_1,x_1x_1x_2,x_2\}$ is  not Shirshov closed,  whereas $\{x_1,x_1x_2,x_1x_1x_2,x_2\}$ is. Later we will need the following: 
\begin{lem}\emph{\cite[Lem.~4]{KhPBW}}\label{LemZerleg}
Let $u,v\in\Ly$ and $u_1,u_2\in\X\backslash\{1\}$ such that $u=u_1u_2$ and $u_2<v$. Then we have
$$
uv<u_1v<v\mbox{ and }uv<u_2v<v.
$$
\end{lem}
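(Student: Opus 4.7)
The conclusion consists of four inequalities: $uv<u_1v$, $u_1v<v$, $uv<u_2v$, and $u_2v<v$. My plan is to isolate a single sublemma that uniformly handles the two ``outer'' inequalities (those with $v$ on the right-hand side), and then obtain the two ``inner'' inequalities by one-line applications of Lemma~\ref{lemord}.

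\emph{Sublemma.} For any $w\in\X\setminus\{1\}$ and $v\in\Ly$ with $w<v$, one has $wv<v$. To prove it I would split on whether $v$ begins with $w$. If not, Lemma~\ref{lemord}(2), applied with the roles of ``$v,w$'' played by our $w,v$ and taking $u=v$, $u'=1$, gives $wv<v$ directly. If so, write $v=ws$ with $s\in\X\setminus\{1\}$; since $s$ is a proper non-trivial ending of the Lyndon word $v$, we have $v<s$, so Lemma~\ref{lemord}(1) yields $wv=w(ws)<ws=v$.

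With the sublemma in hand I would deduce the four inequalities as follows. The inequality $u_2v<v$ is immediate from the sublemma with $w=u_2$, using the hypothesis $u_2<v$. For $u_1v<v$, I first establish $u_1<v$: since $u=u_1u_2\in\Ly$ with $u_2\neq 1$, the word $u_1$ is a proper prefix of $u$, so $u_1<u$; chaining with the Lyndon condition $u<u_2$ and the hypothesis $u_2<v$ gives $u_1<u_2<v$, and then the sublemma applies. The inner inequality $uv<u_1v$ follows from $u_2v<v$ by writing $uv=u_1(u_2v)$ and applying Lemma~\ref{lemord}(1). Finally, for $uv<u_2v$: the Lyndon property of $u$ gives $u<u_2$, and $u_2$ cannot begin with $u$ since $|u_2|<|u|$, so Lemma~\ref{lemord}(2) gives $uv<u_2v$.

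The step I expect to be the main obstacle is the inequality $u_1v<v$: it is the only one that essentially uses both Lyndon hypotheses---the Lyndon property of $u$ to derive $u_1<v$, and the Lyndon property of $v$ to handle the case where $v$ begins with $u_1$ inside the sublemma. The other three inequalities rely on just one Lyndon hypothesis combined with a single invocation of Lemma~\ref{lemord}, so they are essentially bookkeeping once the sublemma is established.
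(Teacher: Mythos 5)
Your proof is correct. Note that the paper does not prove this lemma at all---it is quoted from Kharchenko \cite{KhPBW}---so there is no in-paper argument to compare against; your derivation is a valid self-contained one from Lemma~\ref{lemord} and the definitions. The sublemma ($w<v$ with $v\in\Ly$ implies $wv<v$) correctly splits on whether $v$ begins with $w$, using Lemma~\ref{lemord}(2) in one case and the Lyndon property of $v$ together with Lemma~\ref{lemord}(1) in the other, and the four inequalities then follow as you describe from the chain $u_1<u<u_2<v$ and the fact that $u_2$ cannot begin with $u$ for length reasons.
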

\subsection{Super letters and super words} Let the free algebra $\kX$ be graded as in Example \ref{qCommutExkX}. For any $u\in\Ly$ we define recursively on $\ell(u)$ the map 
\begin{align}\label{DefnSuperLett}
[\,.\,]:\Ly\rightarrow\kX,\quad u \mapsto [u].
\end{align}

\noindent
If $\ell(u)=1$, then set $[x_i]:=x_i$ for all $1\le i\le\theta$. Else if $\ell(u)>1$ and $\Sh{u}{v}{w}$ we define $[u]:=\bigl[[v],[w]\bigr]$. This map is well-defined since inductively all $[u]$ are $\Z^{\theta}$-homogeneous such that we can build iterated $q$-commutators; see Example \ref{qCommutExkX}. The elements $[u]\in\kX$ with $u\in\Ly$ are called \emph{super letters}. E.g. $[x_1x_1x_2x_1x_2]=\bigl[[x_1x_1x_2],[x_1x_2]\bigr]=\bigl[[x_1,[x_1,x_2]],[x_1,x_2]\bigr]$.
If $L\subset \Ly$ is Shirshov closed then the subset of $\kX$
$$
[L]:=\bigl\{[u]\,\big|\, u\in L\bigr\}
$$ 
is a set of iterated $q$-commutators. Further 
$
[\Ly]=\bigl\{[u]\,\big|\, u\in\Ly\bigr\}
$
is the set of all super letters and the map 
$[\,.\,]:\Ly\rightarrow[\Ly]$ is a bijection, which follows from Lemma \ref{LemLexicoSupWords} below. Hence we can define an order $\le$ of the super letters $[\Ly]$ by 
$$
[u]< [v]:\Leftrightarrow u<v,
$$ thus $[\Ly]$ is a new alphabet containing the original alphabet $X$; so the name ``letter'' makes sense. Consequently, products of super letters are called \emph{super words}. We denote 
$$
[\Ly]^{(\N)}:=\bigl\{[u_1]\ldots [u_n]\,\bigl|\,n\in\N,\, u_i\in\Ly\bigr\}
$$
the subset of $\kX$ of all super words. In order to define a lexicographical order on $\SupW$, we need to show that an arbitrary super word has a unique factorization in super letters. This is not shown in \cite{KhPBW}.

For any word $u=x_{i_1}x_{i_2}\ldots x_{i_n}\in\X$ we define the \emph{reversed word} 
$
\overleftarrow{u}:=x_{i_n}\ldots x_{i_2}x_{i_1}.
$
Clearly, $\overleftarrow{\overleftarrow{u}}=u$ and  $\overleftarrow{uv}=\overleftarrow{v}\overleftarrow{u}$. Further for any $a=\sum \alpha_i u_i\in\kX$ we call the lexicographically smallest word of the $u_i$ with $\alpha_i\neq 0$ the \emph{leading word} of $a$ and further define $\overleftarrow{a}:=\sum \alpha_i \overleftarrow{u_i}$.

\begin{lem}\label{LemLexicoSupWords} Let $u\in\Ly\backslash X$. Then there exist $n\in\N$, $u_i\in\X$, $\alpha_i\in\k$ for all $1\le i\le n$ and $q\in\k^\times$ such that 
$$
[u]=u+\sum_{i=0}^n\alpha_iu_i+ q\overleftarrow{u}\quad \text{and}\quad \overleftarrow{[u]}=\overleftarrow{u}+\sum_{i=0}^n\alpha_i\overleftarrow{u_i}+ q u.
$$
Moreover, $u$ is the leading word of both $[u]$ and $\overleftarrow{[u]}$.
\end{lem}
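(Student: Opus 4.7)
\emph{Plan.} I proceed by induction on $\ell(u)\geq 2$. The base case $\ell(u)=2$ is immediate: for $u=x_ix_j$ with $i<j$ one computes directly $[u]=x_ix_j-q_{ij}x_jx_i=u-q_{ij}\overleftarrow u$, so the claim holds with empty middle sum and $q=-q_{ij}\ne 0$. For $\ell(u)\geq 3$ I use the Shirshov decomposition $\Sh{u}{v}{w}$, which by definition gives $[u]=[v][w]-q_{v,w}[w][v]$ with $q_{v,w}=\chi_w(g_v)\in\k^{\times}$, and I apply the induction hypothesis to $[v]$ and to $[w]$ (reading $[v]=v$ and $q_v=1$ when $v\in X$, similarly for $w$).

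To show that $u=vw$ is the leading word of $[u]$ with coefficient $1$, I expand both products. Each monomial in $[v][w]$ has the form $v_iw_j$ with $|v_i|=|v|$ and $v\leq v_i$, $w\leq w_j$; since $|v_i|=|v|$, the word $v$ cannot be a proper prefix of $v_i$, so Lemma \ref{lemord}(1) (in case $v_i=v$) and Lemma \ref{lemord}(2) (in case $v<v_i$) together force $vw\leq v_iw_j$, with equality precisely for $(v_i,w_j)=(v,w)$ contributing coefficient $1$. For a monomial $w_jv_i$ of $[w][v]$, the classical Lyndon fact that $v,w\in\Ly$ with $v<w$ implies $vw<wv$ (because $vw\in\Ly$ is strictly less than its cyclic rotation), combined with Lemma \ref{lemord}, yields $vw<w_jv_i$ strictly in every case. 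Hence no monomial of $[w][v]$ equals $u$, and $u$ is indeed the leading word of $[u]$.

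For the $\overleftarrow u$-coefficient, write $\overleftarrow u=\overleftarrow w\,\overleftarrow v$. Its unique split into pieces of lengths $|w|$ and $|v|$ in $[w][v]$ forces $w_j=\overleftarrow w$ and $v_i=\overleftarrow v$, so this term contributes $-q_{v,w}q_vq_w\ne 0$ to $[u]$. The competing split of $\overleftarrow u$ into pieces of lengths $|v|$ and $|w|$ in $[v][w]$ has its first piece $v_i^{*}$ starting with the last letter of $w$, which by the Lyndon property of $w$ strictly exceeds the first letter of $v$; when $v\in X$ (or $w\in X$) this immediately rules out $v_i^{*}\in[v]$, and in the remaining case a structural analysis of the monomials occurring in $[v]$ forbids $v_i^{*}=\overleftarrow{w^{(2)}}$ (where $w^{(2)}$ denotes the length-$|v|$ suffix of $w$). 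The second identity follows by applying the anti-involution $a\mapsto\overleftarrow a$ of $\kX$ to the first; the inequality $u<\overleftarrow u$ is direct from the Lyndon property (first letter of $\overleftarrow u$ equals the last letter of $u$, which strictly exceeds the smallest letter of $u$); and $u\leq\overleftarrow{u_i}$ for each middle word $u_i$ is then obtained by a symmetric inductive argument paralleling the leading-word step.

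The main obstacle is the general-case exclusion of $v_i^{*}$ from $[v][w]$: the leading-word property alone does not suffice, since $v_i^{*}$ need only be $\geq v$. A clean way around this is to enrich the induction hypothesis to also track the full set of monomials appearing in a super letter (for instance, that every monomial $v'$ of $[v]$ satisfies $\overleftarrow{v'}\geq v$ as well as $v'\geq v$), and combine this with the Shirshov position of $w$ inside $u$ to discard the candidate $\overleftarrow{w^{(2)}}$; the same enrichment is what yields $u\leq\overleftarrow{u_i}$ above.
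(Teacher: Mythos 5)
Your induction scheme, base case, and leading-word argument for $[u]=[v][w]-q_{v,w}[w][v]$ all match the paper's proof. The genuine gap is exactly where you flag it: the nonvanishing of the coefficient of $\overleftarrow{u}$ in $[u]$. You reduce this to excluding monomials $v_i^{*}w_j^{*}=\overleftarrow{w}\,\overleftarrow{v}$ arising from $[v][w]$, but you do not actually exclude them --- ``a structural analysis of the monomials occurring in $[v]$ forbids $v_i^{*}=\overleftarrow{w^{(2)}}$'' is an unproved assertion, and your proposed repair is left as a plan rather than an argument. Note also that the ``enrichment'' you suggest (every monomial $v'$ of $[v]$ satisfies $\overleftarrow{v'}\ge v$) is not something you need to add to the induction hypothesis: it is literally the second half of the statement being proved, namely that $v$ is the leading word of $\overleftarrow{[v]}$, so it is already available by induction.

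The paper closes this gap with one clean move that your sketch circles around but never executes: prove the two displayed identities simultaneously and use that $a\mapsto\overleftarrow{a}$ is an anti-automorphism of $\kX$. Then the coefficient of $\overleftarrow{u}$ in $[u]$ equals the coefficient of $u$ in $\overleftarrow{[u]}=\overleftarrow{[w]}\,\overleftarrow{[v]}-q_{v,w}\,\overleftarrow{[v]}\,\overleftarrow{[w]}$, i.e.\ a \emph{leading} coefficient: by induction $\overleftarrow{[v]}$ and $\overleftarrow{[w]}$ have leading words $v$ and $w$ with nonzero coefficients $q$ and $q'$, so by the same Lemma \ref{lemord} argument you already used for $[v][w]$, the two products have leading words $wv$ and $vw$; since $vw<wv$, the word $u=vw$ occurs only in the second product, with coefficient $-q_{v,w}qq'\ne 0$. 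This replaces your monomial-matching analysis entirely --- no case split on $v\in X$, no $w^{(2)}$, no extra information about the monomials of $[v]$. If you insist on your direct route, the same idea finishes it: if $v_i^{*}w_j^{*}=\overleftarrow{w}\,\overleftarrow{v}$ with $v_i^{*}$ from $[v]$ and $w_j^{*}$ from $[w]$, reverse to get $\overleftarrow{w_j^{*}}\,\overleftarrow{v_i^{*}}=vw$; but $\overleftarrow{w_j^{*}}\ge w$ and $\overleftarrow{v_i^{*}}\ge v$ by the inductive ``Moreover'' clause, forcing $\overleftarrow{w_j^{*}}\,\overleftarrow{v_i^{*}}\ge wv>vw$, a contradiction.
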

\begin{proof} We proceed by induction on $\ell(u)$. If $\ell(u)=2$, then $u=x_ix_j$ for some $1\le i<j\le \theta$ and $[u]=[x_ix_j]=x_ix_j- q_{ij}x_jx_i=u-q_{ij}\overleftarrow{u}$. Let $\ell(u)>2$, $\Sh{u}{v}{w}$ and $[u]=[v][w]-q_{vw}[w][v]$. By induction
\begin{align*}
 [v]=v+\sum_{i}\beta_iv_i+ q\overleftarrow{v}&\quad\text{ and }\quad \overleftarrow{[v]}=\overleftarrow{v}+\sum_{i}\beta_i\overleftarrow{v_i}+ q v,\quad\text{ resp.}\\
 [w]=w+\sum_{j}\gamma_iw_i+ q'\overleftarrow{w}&\quad\text{ and }\quad \overleftarrow{[w]}=\overleftarrow{w}+\sum_{i}\gamma_i\overleftarrow{w_i}+ q' w
\end{align*}
with $q,q'\neq 0$ and leading word $v$ resp.~$w$. Hence $[v][w]$ and $\overleftarrow{[v]}\overleftarrow{[w]}$ resp.~$[w][v]$ and $\overleftarrow{[w]}\overleftarrow{[v]}$ have the leading words $vw$ resp.~$wv$. Since $u$ is Lyndon we get $u=vw<wv$, thus the leading word of $[u]$ and $\overleftarrow{[u]}$ is $u$ and further they are of the claimed form.
\end{proof}

\begin{prop}\label{LemSupWordsUnique} Let $u_1,\ldots,u_n,v_1,\ldots,v_m\in\Ly$. If $[u_1][u_2]\ldots[u_n] = [v_1][v_2]\ldots[v_m]$,  then $m=n$ and $u_i=v_i$ for all $1\le i\le n$.
\end{prop}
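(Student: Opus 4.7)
The plan is to extract from the single algebraic equation $[u_1]\cdots[u_n]=[v_1]\cdots[v_m]$ in $\kX$ two independent word identities in $\X$ --- one by comparing leading words of the equation itself, the other by comparing leading words of its reversal --- and then to combine them with the border-freeness of Lyndon words.

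First I would establish the key leading-word computation: the leading word of $[u_1][u_2]\cdots[u_n]$, as an element of $\kX$, equals the concatenation $u_1 u_2 \cdots u_n$, and appears with coefficient $1$. Indeed, each $[u_i]$ is, by construction, a $\k$-linear combination of permutations of the letters of $u_i$ (all of length $\ell(u_i)$), and by Lemma \ref{LemLexicoSupWords} the lex-smallest term is $u_i$ itself with coefficient $1$. In an expanded product, any non-leading contribution has the form $w_1 w_2 \cdots w_n$ with each $w_i\ge u_i$ and some $w_k>u_k$ strictly; since $|w_k|=\ell(u_k)$ and $w_k\neq u_k$, the word $w_k$ cannot begin with $u_k$, so Lemma \ref{lemord}(2) forces $u_1\cdots u_n < w_1\cdots w_n$. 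Applying the same reasoning to $\overleftarrow{[u_1]\cdots[u_n]}=\overleftarrow{[u_n]}\cdots\overleftarrow{[u_1]}$, using that $\overleftarrow{[u_i]}$ also has leading word $u_i$ by Lemma \ref{LemLexicoSupWords}, shows that the leading word of this reversed product is $u_n u_{n-1} \cdots u_1$. The hypothesis and its image under $\overleftarrow{(\cdot)}$ therefore yield the two word identities
\begin{align*}
u_1 u_2 \cdots u_n &= v_1 v_2 \cdots v_m, \\
u_n u_{n-1} \cdots u_1 &= v_m v_{m-1} \cdots v_1.
\end{align*}

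I would then induct on $n+m$; the case $n=m=0$ is vacuous. Without loss of generality assume $\ell(u_1)\le \ell(v_1)$. If $\ell(u_1)=\ell(v_1)$, the first identity forces $u_1=v_1$; since $\kX$ is a domain and $[u_1]\neq 0$, left-cancellation reduces to a shorter instance, closed by the inductive hypothesis. If instead $\ell(u_1)<\ell(v_1)$, the first identity shows that $u_1$ is a proper prefix of $v_1$, and reading the last $\ell(u_1)$ letters of the second identity shows that $u_1$ is also a proper suffix of $v_1$. Thus $u_1$ would be a nontrivial border of the Lyndon word $v_1$, which is impossible: any such border $r$ is both a proper prefix of $v_1$ (so $r<v_1$ lexicographically) and a proper ending of $v_1$ (so $v_1<r$ by the Lyndon property), a contradiction.

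The main obstacle is recognising that a single leading-word comparison on the original equation is too weak, since a word in $\X$ may admit several distinct Lyndon factorisations. The decisive trick is that the anti-involution $\overleftarrow{(\cdot)}$ produces an independent second word identity from the same ingredients supplied by Lemma \ref{LemLexicoSupWords}, and once both identities are in hand the equality $u_1=v_1$ follows immediately from the elementary border-freeness of Lyndon words; the domain property of $\kX$ then handles the induction step by left cancellation.
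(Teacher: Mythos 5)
Your proposal is correct and follows essentially the same route as the paper: both extract the two word identities $u_1\cdots u_n=v_1\cdots v_m$ and $u_n\cdots u_1=v_m\cdots v_1$ from the leading words of the product and of its reversal (via Lemma \ref{LemLexicoSupWords}), and both then rule out $\ell(u_1)\neq\ell(v_1)$ by observing that the shorter of $u_1,v_1$ would be a proper prefix and a proper suffix of the longer one, contradicting the Lyndon property, before cancelling and inducting. The only differences are presentational (induction on $n+m$ versus $\max\{m,n\}$, and phrasing the contradiction as border-freeness of $v_1$ rather than of $u_1$).
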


\begin{proof}
Induction on $\operatorname{max}\{m,n\}$, we may suppose $m\le n$. If $n=1$ then also $m=1$, hence $[u_1]=[v_1]$ and both have the same leading word $u_1=v_1$.

Let $n>1$: By Lemma \ref{LemLexicoSupWords} $[u_1]\ldots[u_n]=[v_1]\ldots[v_m]$ has the leading word $u_1\ldots u_n=v_1\ldots v_m$ and 
$$
\overleftarrow{[u_n]}\ldots \overleftarrow{[u_1]}=\overleftarrow{[u_1]\ldots[u_n]}=\overleftarrow{[v_1]\ldots[v_m]}=\overleftarrow{[v_m]}\ldots\overleftarrow{[v_1]}
$$
has the leading word $u_n\ldots u_1=v_m\ldots v_1$. 

If $\ell(u_1)\ge\ell(v_1)$, then $u_1=v_1u$  and $u_1=u'v_1$ for some $u,u'\in\X$. If $u,u'\neq 1$, we get the contradiction $v_1<v_1u=u'v_1<v_1$, since $u_1$ is Lyndon. Else if $\ell(u_1)<\ell(v_1)$, it is the same argument using that $v_1$ is Lyndon. Hence $u_1=v_1$ and by induction the claim follows.
\end{proof}

Now the lexicographical order on all super words $[\Ly]^{(\N)}$, as defined above on regular words, is well-defined. We denote it also by $\le$.

\subsection{A well-founded ordering of super words}\label{SectWellFoundOrder} The \emph{length} of a super word $U=[u_1][u_2]\ldots[u_n]\in\SupWL$ is defined as 
$
\ell(U):=\ell(u_1u_2\ldots u_n).
$
\begin{defn}
For $U, V\in \SupW$ we define $U\prec V$ by 
\begin{itemize}
\item $\ell(U)<\ell(V)$, or
\item $\ell(U)=\ell(V)$  and $U>V$ lexicographically in $\SupW$.
\end{itemize}
\end{defn}
This defines a total ordering of $\SupW$ with minimal element $1$. As $X$ is assumed to be finite, there are only finitely many super letters of a given length. Hence every nonempty subset of $\SupW$ has a minimal element, or equivalently, $\preceq$ fulfills the descending chain condition: $\preceq$ is \emph{well-founded}, making way for inductive proofs on $\preceq$.

  \section{A class of pointed Hopf algebras}\label{SectCharHA}
In this chapter we deal with a special class of pointed Hopf algebras. Let us recall the notions and results of \cite[Sect.~3]{KhPBW}: Let $\theta\ge 1.$ A Hopf algebra $A$ is called a \emph{character Hopf algebra} if it is generated as an algebra by
elements $a_1,\ldots ,a_{\theta}$ and an abelian group $G(A)=\G$ of all group-like elements such that  for all $1\le i\le \theta$ there are $g_i\in\G$ and $\chi_i\in\Gh$ with
\begin{align*}
\Delta(a_i)=a_i\o 1 +g_i\o a_i\qquad \text{and}\qquad
 ga_i = \chi_i(g)a_i g.
\end{align*}
As mentioned in the introduction this covers a wide class of examples of Hopf algebras. The minimal number $\theta$ such that $a_1,\ldots ,a_{\theta}$ (also with renumbering) and $\G$ generate $A$ is called the \emph{rank} of $A$.

The aim of this section is to construct for any \emph{character Hopf algebra} $A$ a smash product $\kX\#\k[\G]$ together with an ideal $I$ such that
$
A\cong (\kX \# \k[\G])/I.
$
Note that any character Hopf algebra is $\Gh$-graded by 
$
A=\oplus_{\chi\in\Gh}A^\chi\quad\text{with} \quad A^\chi:=\{a\in A\ |\ ga=\chi(g)ag\},
$ 
since $A$ is genereated by $\Gh$-homogeneous elements, and elements of different $A^\chi$ are linearly independent. 

\subsection{PBW basis in hard super letters}\label{SubSectHardSupLettKh} 
Let from now on  $A$ be  a character Hopf algebra. The algebra map $$\kX\rightarrow A,\quad x_i\mapsto a_i$$ allows to identify elements of $\kX$ with elements of $A$: By abuse of language we will write for the image of $a\in \kX$ also $a$. Further let $\kX$ be $\G$-, $\Gh$-graded and $q_{u,v}$ as in Example \ref{qCommutExkX} with the $g_i$ and $\chi_i$ above. 
Then a super letter $[u]\in A$ is called \emph{hard} if it is not a linear combination of 
\begin{itemize}
 \item $\quad U=[u_1]\ldots[u_n]\in\SupW$ with $n\ge 1$, $\ell(U)=\ell(u)$, $u_i> u$ for all $1\le i\le n$, and
\item $\quad Vg$ with $V\in\SupW$, $\ell(V)<\ell(u)$ and $g\in\G$.
\end{itemize}
Note that if $[u]$ is hard and $\Sh{u}{v}{w}$,  then also $[v]$ and $[w]$ are hard; this follows from \cite[Cor.~2]{KhPBW}. We may assume that $a_1,\ldots,a_\theta$ are hard, otherwise $A$ would be generated by $\G$ and a proper subset of $a_1,\ldots ,a_{\theta}$. But this says that the set of all hard super letters is Shirshov closed.

For any hard $[u]$ we define $N_u'\in\{2,3,\ldots,\infty\}$ as the minimal $r\in\N$ such that $[u]^r$ is  not a linear combination of 
\begin{itemize}
 \item $\quad U=[u_1]\ldots[u_n]\in\SupW$ with $n\ge 1$, $\ell(U)=r\ell(u)$, $u_i> u$ for all $1\le i\le n$, and
\item $\quad Vg$ with $V\in\SupW$, $\ell(V)<r\ell(u)$ and $g\in\G$.
\end{itemize}

\begin{thm}\emph{\cite[Thm.~2, Lem.~13]{KhPBW}}\label{ThmKh} Let $A$ be a character Hopf algebra. Then the set of all $$[u_1]^{r_1}[u_2]^{r_2}\ldots [u_t]^{r_t}g$$ with $t\in\mathbb N$, $[u_i]$ is hard, $u_1>\ldots>u_t$, $0< r_i<N_{u_i}'$, $g\in\G$, forms a $\k$-basis of $A$. 

Further, for every hard super letter $[u]$ with $N_u'<\infty$ we have $\ord q_{u,u}=N_u'$ if $\operatorname{char}\k=0$ resp.~$p^k\ord q_{u,u}=N_u'$ for some $k\ge 0$ if $\operatorname{char}\k=p>0$.
\end{thm}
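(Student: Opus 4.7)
The plan is to establish Theorem~\ref{ThmKh} in two stages: first the PBW basis, then the arithmetic statement on $N_u'$ and $\ord q_{u,u}$. For the PBW claim I would separate spanning from linear independence.

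\emph{Spanning.} Via the algebra map $\kX\#\k[\G]\to A$, every element of $A$ is a sum of terms $Ug$ with $U$ a word in $a_1,\ldots,a_{\theta}$ and $g\in\G$, i.e.\ a super word. I would prove by $\preceq$-induction (Section~\ref{SectWellFoundOrder}) that every such $Ug$ is a linear combination of the claimed monomials $[u_1]^{r_1}\cdots[u_t]^{r_t}g$. The inductive step uses three reductions. If a super letter $[u]$ appearing in $U$ is not hard, it is by definition a combination of super words $U'$ with $\ell(U')=\ell(u)$ whose super letters are strictly larger than $[u]$, plus terms $Vg'$ with $\ell(V)<\ell(u)$; after substitution the original term is rewritten as $\preceq$-smaller super words. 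If $[u]^r$ occurs with $r\ge N_u'$, the same reduction applies by definition of $N_u'$. Finally, to impose the ordering $u_1>\cdots>u_t$, I would use the $q$-commutator identities of Proposition~\ref{PropqCommut}: for super letters $[u]<[v]$ the product $[v][u]$ equals $q_{u,v}^{-1}[u][v]$ minus a $q_{v,u}$-commutator that, by Lemma~\ref{LemLexicoSupWords} and the Shirshov decomposition, can be expanded into super words of strictly smaller $\preceq$-rank. Well-foundedness of $\preceq$ closes the induction.

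\emph{Linear independence.} Here I would follow a skew-derivation strategy. The coproduct of $A$ produces, for each $i$, a $\k$-linear map $\partial_i\colon A\to A$ characterized by the component of $\Delta(a)-a\otimes 1$ along $\cdot\otimes a_i$, satisfying a twisted Leibniz rule whose scalars are polynomial in the $q_{jk}$. Assume a nontrivial relation $\sum \alpha_{\vec r,g}\,[u_1]^{r_1}\cdots[u_t]^{r_t}g=0$ and consider its $\preceq$-largest monomial. The key computation is that iterated application of a sequence of $\partial_i$'s adapted to the leading super word picks out exactly that monomial times a nonzero scalar, a product of $q_{u_i,u_i}$-factorials that is invertible precisely because every $r_i<N_{u_i}'$; all other PBW monomials either vanish or fall under the inductive hypothesis. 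This forces all coefficients to be zero.

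\emph{The relation $N_u'=p^k\ord q_{u,u}$.} For hard $[u]$ with $\ord q_{u,u}=N<\infty$, iterating the twisted Leibniz rule on $\Delta([u])=[u]\otimes 1+g_u\otimes[u]+(\text{shorter super words})$ yields, modulo shorter super words,
\[
\Delta([u]^r)=\sum_{k=0}^r\binom{r}{k}_{\!q_{u,u}}\,[u]^k g_u^{r-k}\otimes[u]^{r-k}.
\]
For $r=N$ in characteristic $0$, or $r=p^kN$ in characteristic $p>0$, the $q$-binomials vanish for $0<k<r$, so $[u]^r$ is skew-primitive modulo shorter super words. Using the PBW basis already established for the Shirshov-closed subalgebra of strictly shorter hard super letters, this forces $[u]^r$ to admit a reduction of the type appearing in the definition of $N_u'$, so $N_u'\le r$. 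Conversely, for strictly smaller $r$ the $q$-binomials are nonzero, and part~(1) applied at lower levels rules out any such reduction, giving $N_u'\ge r$.

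The main obstacle is linear independence. One must verify intrinsically on $A$ (not merely on $\kX$) that the $\partial_i$ are well defined, that they transport correctly through the character action of $\G$, and above all that a single sequence of $\partial_i$'s can isolate one PBW coefficient without the lower-order corrections (coming from Lemma~\ref{LemLexicoSupWords} and from shorter super letters) spoiling the extraction. Controlling leading terms under $\preceq$ rather than plain word-length is what makes the inductive argument terminate and makes the isolated scalar a product of nonvanishing $q$-factorials.
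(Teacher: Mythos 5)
First, a point of comparison: the paper does not prove this statement at all --- Theorem~\ref{ThmKh} is imported verbatim from Kharchenko \cite{KhPBW} (his Theorem~2 and Lemma~13), and the present paper only uses it as a black box in the proof of Theorem~\ref{PropIdealCharHopfAlg}. So there is no in-paper argument to measure your proposal against; what you have written is an outline of Kharchenko's own theorem. Your spanning step is essentially right and is the same $\preceq$-induction that the paper itself reuses in Lemma~\ref{LemDefnfuv} (reduce non-hard letters, reduce powers $\ge N_u'$, reorder via the $q$-Jacobi identity, and invoke well-foundedness of $\preceq$). That half is fine.

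The genuine gap is in the linear-independence step, which is where the entire content of the theorem lives, and your sketch does not close it. The maps $\partial_i$ you invoke are not obviously well defined on $A$: extracting ``the component of $\Delta(a)-a\otimes 1$ along $\cdot\otimes a_i$'' presupposes a decomposition of $A$ in which $a_i$ is a basis vector of a distinguished complement, i.e., it presupposes something very close to the basis you are trying to establish. Moreover $A$ is in general not $\mathbb{N}$-graded (relations such as $[x_1x_2]=1-g^2$ in $U_q(\mathfrak{sl}_2)$ are inhomogeneous), so ``leading term'' and ``lower-order corrections'' must be controlled through the coradical filtration rather than through word length, and the claim that one sequence of $\partial_i$'s isolates a single PBW coefficient with an invertible $q$-factorial scalar is precisely the hard combinatorial core of Kharchenko's proof --- you state it but do not prove it. A second, smaller gap sits in your argument for $N_u'=p^k\ord q_{u,u}$: showing that $[u]^r$ is skew-primitive modulo shorter super words does not by itself ``force $[u]^r$ to admit a reduction'' (in the free algebra $x_1^N$ is skew-primitive yet irreducible); the correct direction is to assume a putative reduction of $[u]^r$ for $r$ not of the stated form, apply $\Delta$, and derive a contradiction from the nonvanishing middle $q$-binomial coefficients, again using the independence already established in lower degree. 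As written, the proposal is a reasonable roadmap but not a proof.
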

We now generally construct a smash product $\kX\# \k[\G]$ with an ideal $I$. 

\subsection{Prototype: The smash product $\kX\# \k[\G]$}
 Let $\kX$ be $\G$- and $\Gh$-graded as in Example \ref{qCommutExkX}, and $\k[\G]$ be endowed with the usual bialgebra structure $\Delta(g)=g\o g$ and $\varepsilon(g)=1$ for all $g\in\G$.
Then we define 
$$
g\cdot x_i := \chi_i(g)x_i,\ \text{ for all }1\le i\le\theta.
$$
In this case, $\kX$ is a $\k[\G]$-module algebra 
and we calculate $gx_i=\chi_i(g)x_ig$, $gh=hg=\varepsilon(g)hg$ in $\kX\# \k[\G]$.
Thus  $x_i\in(\kX\# \k[\G])^{\chi_i}$ and $\k[\G]\subset (\kX\# \k[\G])^\varepsilon$ and in this way 
$
\kX\# \k[\G]=\oplus_{\chi\in\Gh}(\kX\# \k[\G])^\chi.
$
This $\Gh$-grading extends the $\Gh$-grading of $\kX$ in Example \ref{qCommutExkX} to $\kX\# \k[\G]$.

Further   $\kX\# \k[\G]$ is a Hopf algebra with structure determined by 
\begin{align*}
 \Delta(x_i):=x_i\o 1 + g_i\o x_i\qquad\text{and}\qquad  \Delta(g):=g\o g,
\end{align*}
for all $1\le i \le\theta$ and $g\in\G$. Thus our prototype is indeed a character Hopf algebra. Other character Hopf algebras arise from certain quotients of this prototype, as an example consider the following:

\subsection{Motivation: Quantum groups $U_q(\mathfrak{sl}_2)$ and $u_q(\mathfrak{sl}_2)$}\label{SectionMotivation}
Let $q\in\k^{\times}\backslash\{\pm 1\}$, then we define like in \cite[Sect.~VI,VII]{Kassel}
\begin{align*}
U_q(\mathfrak{sl}_2)
         &:=\k\Bigl\langle E,F,K,K^{-1}\ \big|\  KK^{-1}=K^{-1}K=1,\ KEK^{-1}=q^2E,\ KFK^{-1}=q^{-2}F,\\
&\phantom{=\k\Bigl\langle E,F,K,K^{-1}\ \big|}\ \ EF-FE=\frac{K-K^{-1}}{q-q^{-1}}\Bigr\rangle,\\
\Delta(E)&=E\o K + 1\o E,\quad \Delta(F)=F\o 1 + K^{-1}\o F,\quad \Delta(K^{\pm 1})=K^{\pm 1}\o K^{\pm 1},
\end{align*}
which is a character Hopf algebra, too. Let us rewrite this presentation to our conventions:
\begin{ex}\label{exampleQuantumGroupSL2}
\emph{Quantum group}.
Set 
$\G:=\langle g,g^{-1} \ |\ gg^{-1}=g^{-1}g= 1\rangle\cong \Z$, $g_1:=g_2:=g$, and $\chi_1(g):=q^{-2}$, $\chi_2(g):=q^2$.
Then
$$
U_q(\mathfrak{sl}_2) \cong \bigl(\k\la x_1,x_2\ra\#\k[\G]\bigr)/\bigl([x_1x_2]-(1-g^2)\bigr).
$$
\end{ex}
\begin{proof}The isomorphism of Hopf algebras is the map from $U_q(\mathfrak{sl}_2)$ to the right-hand side
which sends $E\mapsto \frac{q^2}{q-q^{-1}}x_1g^{-1}$, $F\mapsto x_2$, $K\mapsto g^{-1}$, $K^{-1}\mapsto g$.
\end{proof}

A finite-dimensional version is the following example:
\begin{ex}\label{exampleFrobLusztKernelSL2}
\emph{Frobenius-Lusztig kernel}.
Let $q\in\k^{\times}$ with odd $\ord q=N>2$, $\G:=\langle g \ |\ g^N=1\rangle\cong \Z/(N)$, $g_1:=g_2:=g$, and $\chi_1(g):=q^{-2}$, $\chi_2(g):=q^2$. Then
\begin{align*}
 u_q(\mathfrak{sl}_2)\cong  \bigl(\k\la x_1,x_2\ra\#\k[\G]\bigr)/\bigl(\ [x_1x_2]-(1-g^2),\ x_1^{N},\ x_2^{N}\ \bigr).
\end{align*}
\end{ex}

Note that in the above examples there are relations involving super letters and powers of super letters. Next we construct the ideals in the general setting:

\subsection{Ideals associated to Shirshov closed sets}\label{SectIdealOfCharHA} 
In this subsection we fix a Shirshov closed $L\subset\Ly$. 
We want to introduce the following notation 
for an $a\in \kX\# \k[\G]$ and  
$W\in\SupW$: 
We will write $a\prec_L W$ (resp. $a\preceq_L W$), if 
$a$ is a linear combination of 
\begin{enumerate}
 \item[\textbullet] $U\in\SupWL$ with $\ell(U)=\ell(W)$, $U>W$ (resp. $U\ge W$), and
\item[\textbullet]  $Vg$ with $V\in\SupWL$, $g\in\G$,  $\ell(V)<\ell(W)$.
\end{enumerate}

Furthermore,  we set for each $u\in L$
either $N_u:=\infty$ or $N_u:=\ord q_{u,u}$  (resp.~$N_u:=p^k\ord q_{u,u}$ with $k\ge 0$ if $\operatorname{char} \k=p>0$) and we want to distinguish the following two sets of words depending on $L$:
\begin{align*}
C(L) &:= \bigl\{w\in\X\backslash L \ |\ \exists u,v\in L:  w=uv,\ u<v,\  \text{and}\ \Sh{w}{u}{v}\bigr\},\\
D(L) &:= \bigl\{u\in L \ |\ N_u<\infty\}.
\end{align*}
Note that $C(L)\subset \Ly$ and $D(L)\subset L\subset \Ly$ are sets of Lyndon words.

Moreover, let $\red{w} \in (\kX\# \k[\G])^{\chi_{w}}$ for all $w\in C(L)$ such that $\red{w}\prec_L [w]$;
and let $\redh{u} \in (\kX\# \k[\G])^{\chi_u^{N_u}}$ for all $u\in D(L)$ such that $\redh{u}\prec_L [u]^{N_u}$. 
Now we define the $\Gh$-homogeneous ideal:
\begin{defn}\label{DefnIdealLcd}
In the above setting let $I_{L,N,c,d}$ be the ideal of $\kX\# \k[\G]$ generated by the following elements:
\begin{align}
[w]         - \red{w}&  \qquad \text{for all } w\in C(L) ,\label{ThmPBWCritIdluv}\\
 [u]^{N_u}   - \redh{u}&  \qquad \text{for all }  u\in D(L).\label{ThmPBWCritIdluN}
\end{align}
For shortness we will write just $I$ for $I_{L,N,c,d}$.
\end{defn}

\begin{ex}Let $X=\{x_1,x_2\}$. If $L=X$ then $C(L)=\{x_1x_2\}$ and we have 
 Examples \ref{exampleQuantumGroupSL2} and \ref{exampleFrobLusztKernelSL2}. It is $N_1=N_2=\infty$ in Example \ref{exampleQuantumGroupSL2}  and $N_1=N_2=N$ in \ref{exampleFrobLusztKernelSL2}. Other examples  are treated in Sections \ref{SectExamplesRankOne} and \ref{SectExamplesRankTwo}. Furthermore we want to mention the list of  more complicated examples of \cite{Helbig-PhD, Helbig-Lift}.
\end{ex}

In the next Lemma we want to define $\redbr{u}{v}\in\kX\# \k[\G]$ for all $u,v\in L$ with $u<v$, such that $\bigl[[u],[v]\bigr]=\redbr{u}{v}$ modulo $I$. This shows that the relations $\bigl[[u],[v]\bigr]=\redbr{u}{v}$ with with $\nSh{uv}{u}{v}$ or $uv\in L$ are redundant modulo $I$. 
\begin{lem}\label{LemDefnfuv} Let $I'\subset \kX\#\k[\G]$ be  the ideal generated by the elements Eq.~\eqref{ThmPBWCritIdluv}. 
Then there are $\redbr{u}{v}\in(\kX\# \k[\G])^{\chi_{uv}}$ for all $u,v\in L$ with $u<v$ such that
\begin{enumerate}
\item[\emph{(1)}]  $\bigl[[u],[v]\bigr]-\redbr{u}{v}\in I'$,
\item[\emph{(2)}]  $\redbr{u}{v}\preceq_L [uv]$.
\end{enumerate}
The residue classes of $[u_1]^{r_1}[u_2]^{r_2}\ldots [u_t]^{r_t}g$ with $t\in\mathbb N$, $u_i\in L$, $u_1>\ldots>u_t$, $0< r_i<N_{u_i}$, $g\in\G$, $\k$-generate  $(\kX\# \k[\G])/I$.
\end{lem}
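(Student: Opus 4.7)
The plan is a two-step induction. First, I establish parts (1)--(2) by induction on $\ell(u)+\ell(v)$, and then I deduce the spanning claim by well-founded induction on $\preceq$ using the $\redbr{u}{v}$ as rewrite rules.

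For parts (1)--(2), fix $u<v$ in $L$ and assume $\redbr{u'}{v'}$ is already defined for all pairs of strictly smaller total length. I split according to Theorem~\ref{ShirChar}. When $\Sh{uv}{u}{v}$ (which forces $uv\in\Ly$), by the very definition of super letters $[uv]=\bigl[[u],[v]\bigr]$, so if $uv\in L$ I set $\redbr{u}{v}:=[uv]$ and both conclusions are trivial, whereas if $uv\notin L$ then $uv\in C(L)$ and \eqref{ThmPBWCritIdluv} gives $[uv]-\red{uv}\in I'$; setting $\redbr{u}{v}:=\red{uv}$ yields~(1) directly and~(2) from the standing hypothesis $\red{uv}\prec_L[uv]$. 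Otherwise $\nSh{uv}{u}{v}$, which by Theorem~\ref{ShirChar} forces $\ell(u)\ge 2$ with $\Sh{u}{u_1}{u_2}$ and $u_2<v$; both $u_1$ and $u_2$ lie in $L$ since $L$ is Shirshov closed. The $q$-Jacobi identity from Proposition~\ref{PropqCommut}(2), applied to $[u]=\bigl[[u_1],[u_2]\bigr]$ and $[v]$, gives
\begin{align*}
\bigl[[u],[v]\bigr]=\bigl[[u_1],[[u_2],[v]]\bigr]-q_{u_1,u_2}[u_2]\bigl[[u_1],[v]\bigr]+q_{u_2,v}\bigl[[u_1],[v]\bigr][u_2].
\end{align*}
Since $\ell(u_1v)$ and $\ell(u_2v)$ are each strictly less than $\ell(uv)$, the inductive hypothesis furnishes $\redbr{u_1}{v}$ and $\redbr{u_2}{v}$. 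I substitute modulo $I'$ and then expand the nested bracket $\bigl[[u_1],\redbr{u_2}{v}\bigr]$ via the $q$-derivation rule of Proposition~\ref{PropqCommut}(1); each resulting inner commutator $\bigl[[u_1],[w]\bigr]$, where $[w]$ is a super-letter factor of a summand of $\redbr{u_2}{v}$, is either handled by a further invocation of the inductive hypothesis (whenever $\ell(u_1)+\ell(w)<\ell(uv)$) or, in the boundary case where $[w]=[u_2v]$ is the sole super-letter summand of length $\ell(u_2v)$, recognized via Case~A applied to the pair $(u_1,u_2v)$ (noting $u_1u_2v=uv$).

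The main obstacle is verifying~(2). The expansion produces contributions of length $<\ell(uv)$, which are always admissible, and contributions of length exactly $\ell(uv)$, for which one must show the leading super letter of each summand is $\ge[uv]$ lexicographically. This is exactly the content of Lemma~\ref{LemZerleg}: applied with $u=u_1u_2$ and $u_2<v$, it controls the lexicographic positions of $u_1v$ and $u_2v$ relative to $uv$, ensuring every length-$\ell(uv)$ summand lies in the $\preceq_L$-cone below $[uv]$.

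For the spanning claim, every element of $\kX\#\k[\G]$ is a linear combination of $Ug$ with $U=[w_1]\cdots[w_n]\in\SupWL$ and $g\in\G$. I proceed by well-founded induction on $\preceq$. If $w_1>\cdots>w_n$ and every exponent satisfies $r_i<N_{w_i}$, the element is already in the required form. Otherwise either some adjacent pair has $w_i<w_{i+1}$, in which case
\begin{align*}
[w_i][w_{i+1}]=q_{w_i,w_{i+1}}[w_{i+1}][w_i]+\bigl[[w_i],[w_{i+1}]\bigr]\equiv q_{w_i,w_{i+1}}[w_{i+1}][w_i]+\redbr{w_i}{w_{i+1}}\pmod{I}
\end{align*}
by part~(1), and both summands are strictly $\preceq$-smaller than $U$ (the swap increases the super-word lex order at position~$i$, and the $\redbr{w_i}{w_{i+1}}$-part is controlled by~(2)); or some block $[u]^{N_u}$ appears, in which case \eqref{ThmPBWCritIdluN} rewrites it as $\redh{u}\prec_L[u]^{N_u}$. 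The inductive hypothesis on $\preceq$ then completes the reduction to PBW-ordered form.
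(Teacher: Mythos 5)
Your overall strategy coincides with the paper's: split on whether $\Sh{uv}{u}{v}$, reduce the case $\nSh{uv}{u}{v}$ via the $q$-Jacobi identity to $\bigl[[u_1],\bigl[[u_2],[v]\bigr]\bigr]$ plus terms in $\bigl[[u_1],[v]\bigr]$, expand the nested bracket with the $q$-derivation rule, control the lexicographic bounds with Lemma~\ref{LemZerleg}, and obtain the spanning claim by well-founded induction on $\preceq$. The one substantive deviation is your choice of induction parameter, and it opens a genuine gap.

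The paper inducts on $\ell(u)$ alone; you induct on $\ell(u)+\ell(v)$. This matters exactly at the step you call the boundary case. After substituting $\redbr{u_2}{v}=\sum\alpha U+\sum\beta Vg$ and applying the $q$-derivation rule to $\bigl[[u_1],U\bigr]$ with $U=[l_1]\cdots[l_n]$, only the first inner commutator $\bigl[[u_1],[l_1]\bigr]$ must be replaced by some $\redbr{u_1}{l_1}$ in order to secure (2); the terms with $\bigl[[u_1],[l_i]\bigr]$, $i\ge 2$, may simply be left as they are, since the resulting super words still begin with $[l_1]$ and $l_1\ge u_2v>uv$ (replacing them by the inductive hypothesis is not even licit in general, as $u_1<l_i$ need not hold for $i\ge 2$). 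Now $\ell(l_1)$ can equal $\ell(u_2v)=\ell(u_2)+\ell(v)$, namely whenever $U$ is a single super letter, and then $\ell(u_1)+\ell(l_1)=\ell(u)+\ell(v)$, so your inductive hypothesis does not apply to the pair $(u_1,l_1)$. Your patch does not close this: (a) the only constraint on such a summand is $[l_1]\ge[u_2v]$ with $l_1\in L$ and $\ell(l_1)=\ell(u_2v)$, so $[u_2v]$ is by no means the sole possibility --- for instance $\red{u_2v}$ is an essentially arbitrary element $\prec_L[u_2v]$ and may contain single super letters $[w]$ with $w>u_2v$ of full length; and (b) even for $[l_1]=[u_2v]$, invoking your Case A for the pair $(u_1,u_2v)$ presupposes $\Sh{uv}{u_1}{u_2v}$ and that $uv=u_1\cdot u_2v$ lies in $L\cup C(L)$, neither of which you have established. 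The repair is precisely the paper's induction variable: each recursive call $(u_1,v)$, $(u_2,v)$, $(u_1,l_1)$ has first argument $u_1$ or $u_2$ of length strictly less than $\ell(u)$, while the second argument may grow, so inducting on $\ell(u)$ legitimizes every call, including the boundary one (note $u_1<l_1$ follows from $u_1<u<uv<u_2v\le l_1$, using Lemma~\ref{LemZerleg}). The remainder of your argument, including the spanning step, is sound modulo this repair.
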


\begin{proof} 
For all $u,v\in L$ with $u<v$ and $\Sh{uv}{u}{v}$ we set
$$
\redbr{u}{v}:=\begin{cases}   [uv] ,   &\mbox{if }uv\in L, \\ 
                              \red{uv} ,    &\mbox{if }uv\notin L.
              \end{cases}
$$
We then proceed by induction on $\ell(u)$: If $u\in X$ then $\Sh{uv}{u}{v}$ by Theorem \ref{ShirChar} and by definition the claim is fulfilled. So let $\ell(u)>1$. Again if $\Sh{uv}{u}{v}$ then we argue as in the induction basis. Conversely, let $\nSh{uv}{u}{v}$,
and further $\Sh{u}{u_1}{u_2}$; then $u_2<v$ by Theorem \ref{ShirChar} and by Lemma \ref{LemZerleg} 
\begin{align}
u_1<u_1u_2=u<uv<u_2v  ,\mbox{ and }\;   uv<u_1v.
\label{Prooffuv1}
\end{align}
By induction hypothesis there is a 
$
\redbr{u_2}{v}=  \sum \alpha U +\sum\beta Vg 
$ 
(we omit the indices to avoid double indices) of $\Gh$-degree $\chi_{u_2v}$ with $U=[l_1]\ldots[l_n]\in\SupWL$, $\ell(U)=\ell(u_2v)$, $l_1\ge u_2v$, 
$V\in\SupWL$, $\ell(V)<\ell(u_2v)$, $g\in\G$ and $\bigl[[u_2],[v]\bigr]-\redbr{u_2}{v}\in I'$.  
Then 
$$
\bigl[[u_1],\redbr{u_2}{v}\bigr]
=
    \sum \alpha \bigl[[u_1],U\bigr]+\sum\beta \bigl[[u_1],Vg\bigr].
$$
Since $U$ is $\chi_{u_2v}$-homogeneous we can use the $q$-derivation property of Proposition \ref{PropqCommut} for the term
$$
\bigl[[u_1],U\bigr]
    = \sum_{i=1}^{n}q_{u_1,l_1\ldots l_{i-1}}[l_1]\ldots [l_{i-1}]\bigl[[u_1],[l_i]\bigr]
[l_{i+1}]\ldots [l_n].
$$
By assumption $u_2v\le l_1$, hence we deduce $uv<l_1$ and $u_1<l_1$ from Eq.~\eqref{Prooffuv1}; because of the latter inequality, by the induction hypothesis there is a $\chi_{u_1l_1}$-homogeneous $\redbr{u_1}{l_1}=\sum \alpha' U' +\sum\beta' V'g'$ with $U'\in\SupWL$, $\ell(U')=\ell(u_1l_1)$, $U'\ge [u_1l_1]$, $V'\in\SupWL$, $\ell(V')<\ell(u_1l_1)$, $g'\in\G$ and $\bigl[[u_1],[l_1]\bigr]-\redbr{u_1}{l_1}\in I'$. Since $u_2v\le l_1$ we have $[uv]=[u_1u_2v]\le [u_1l_1]\le U'$. We now define 
$\partial_{u_1}(\redbr{u_2}{v})$ 
$\k$-linearly by
\begin{align*}
&\partial_{u_1}(U):=\redbr{u_1}{l_1}[l_{2}]\ldots [l_n] + \sum_{i=2}^{n} q_{u_1,l_1\ldots l_{i-1}} [l_1]\ldots [l_{i-1}] \bigl[[u_1],[l_i]\bigr] [l_{i+1}]\ldots [l_n],\\
&\partial_{u_1}(Vg):= \bigl[[u_1],V\bigr]_{q_{u_1,u_2v}\chi_{u_1}(g)} g.
\end{align*}
Then
$\partial_{u_1}(\redbr{u_2}{v})\preceq_L [uv]$ with $\Gh$-degree $\chi_{uv}$. Moreover 
$\bigl[[u_1],\bigl[[u_2],[v]\bigr]\bigr]-\partial_{u_1}(\redbr{u_2}{v})\in I'$
, since $\bigl[[u_1],U\bigr]-\partial_{u_1}(U)\in I'$ and $\partial_{u_1}(Vg)=\bigl[[u_1],Vg\bigr]_{q_{u_1,u_2v}}$. 

Finally, because of $u_1<u<v$ there is again by induction assumption 
a $\redbr{u_1}{v}\preceq_L [u_1v]$, which is $\chi_{u_1v}$-homogeneous and $\redbr{u_1}{v}-\bigl[[u_1][v]\bigr]\in I'$ (moreover,  $u_1v>uv$ by Eq.~\eqref{Prooffuv1}). 
 We then define for $\nSh{uv}{u}{v}$
\begin{align}\label{RedCommutDefNotSh}
\redbr{u}{v} :=
   \partial_{u_1}(\redbr{u_2}{v}) +q_{u_2,v}\redbr{u_1}{v}[u_2] -q_{u_1,u_2}[u_2]\redbr{u_1}{v}.
\end{align}
We have $u_2>u$ since $u$ is Lyndon and $u$ cannot begin with $u_2$, hence $u_2>uv$ by Lemma \ref{lemord}. Thus 
$\redbr{u}{v}\prec_L [uv]$. 
Also $\deg_{\Gh}(\redbr{u}{v})=\chi_{uv}$ and by the  $q$-Jacobi identity of Proposition \ref{PropqCommut} we have
$\bigl[[u],[v]\bigr] -\redbr{u}{v}\in I'$.

For the last assertion it suffices to show that the residue classes of $[u_1]^{r_1}[u_2]^{r_2}\ldots [u_t]^{r_t}g$  $\k$-generate the residue classes of $\kX$ in $(\kX\# \k[\G])/I'$: this can be done as in the proof of \cite[Lem.~10]{KhPBW} by induction on $\preceq$ using (1),(2).
\end{proof}


\subsection{Structure of pointed Hopf algebras and Nichols algebras}

\begin{thm}\label{PropIdealCharHopfAlg}
If $A$ is a character Hopf algebra, then there is 
an ideal $I\subset \kX \# \k[\G]$ as in Definition \ref{DefnIdealLcd} 
such that
$$
A\cong (\kX \# \k[\G])/I.
$$
\end{thm}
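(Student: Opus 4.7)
The plan is to identify $\ker\pi$, where $\pi:\kX\#\k[\G]\twoheadrightarrow A$ is the canonical surjective Hopf algebra map sending $x_i\mapsto a_i$ and restricting to the identity on $\G$; its existence is guaranteed by the universal property of the smash product, because the $a_i$ are skew-primitive and satisfy $ga_i=\chi_i(g)a_ig$. The data $(L,N,c,d)$ will be read off from Kharchenko's PBW theorem (Theorem \ref{ThmKh}): set $L:=\{u\in\Ly\mid [u]\text{ is hard in }A\}$, which is Shirshov closed and contains $X$ by the observations immediately after the definition of hardness together with the standing assumption that $a_1,\ldots,a_\theta$ are hard. For each $u\in L$ put $N_u:=N_u'$, which is either $\infty$ or of the form $\ord q_{u,u}$ (resp.\ $p^k\ord q_{u,u}$), matching the shape required in Definition \ref{DefnIdealLcd}.

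Next I construct the reductions. Given $w\in C(L)$, we have $w\notin L$, so $[w]$ is not hard in $A$; hence $\pi([w])$ equals a $\k$-linear combination of super words $U\in\SupW$ of length $\ell(w)$ whose leading Lyndon word is strictly greater than $w$, plus shorter super words multiplied by elements of $\G$. Iteratively replacing each non-hard super letter by its own non-hardness expansion, bubble-sorting adjacent super letters via the quadratic identity $[u][v]=q_{u,v}[v][u]+\bigl[[u],[v]\bigr]$, and substituting each power $[u]^{N_u}$ (for $u\in D(L)$) by the reduction already produced for smaller $\preceq$-type, one rewrites $\pi([w])$ in Kharchenko's PBW basis of $A$. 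A $\Gh$-homogeneous lift of this PBW expression furnishes $c_w\in(\kX\#\k[\G])^{\chi_w}$ with $\pi([w]-c_w)=0$; the same procedure applied to $[u]^{N_u}$ for $u\in D(L)$ produces $d_u\in(\kX\#\k[\G])^{\chi_u^{N_u}}$ with $\pi([u]^{N_u}-d_u)=0$. The simultaneous induction defining these elements is justified by the well-founded ordering $\preceq$ of Section \ref{SectWellFoundOrder}.

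Setting $I:=I_{L,N,c,d}$, we have $I\subseteq\ker\pi$ by construction, so $\pi$ descends to a surjection $\bar\pi:(\kX\#\k[\G])/I\twoheadrightarrow A$. By Lemma \ref{LemDefnfuv} the residue classes of the ordered monomials $[u_1]^{r_1}\cdots[u_t]^{r_t}g$ with $u_i\in L$, $u_1>\cdots>u_t$, $0<r_i<N_{u_i}$ and $g\in\G$ generate $(\kX\#\k[\G])/I$ as a $\k$-vector space, while under $\bar\pi$ they map to Kharchenko's PBW $\k$-basis of $A$ from Theorem \ref{ThmKh}; consequently they are linearly independent in the quotient and form a basis there. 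Thus $\bar\pi$ is bijective, and since $\pi$ is a Hopf algebra map, $\bar\pi$ is an isomorphism of Hopf algebras.

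The main obstacle is the verification that the $c_w$ and $d_u$ so defined satisfy the ordering constraints $c_w\prec_L[w]$ and $d_u\prec_L[u]^{N_u}$; equivalently, that each step of the rewriting preserves both the length bound and the property that length-$\ell(w)$ summands begin with a Lyndon word strictly greater than $w$. Substituting a non-hard leading super letter is harmless because its replacement begins with an even larger Lyndon word; a bubble-sort swap at an interior position leaves the leading letter fixed, while a swap at position one can only enlarge it. The accompanying correction $\bigl[[u],[v]\bigr]$ has the same total length $\ell(u)+\ell(v)$, and Lemma \ref{LemZerleg} combined with Theorem \ref{ShirChar} forces its leading Lyndon word to be at least $uv$, which remains $>w$ whenever $u,v>w$. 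Propagating these local checks through the $\preceq$-induction yields the required $\prec_L$ bounds and closes the argument.
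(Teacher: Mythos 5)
Your proposal is correct and follows essentially the same route as the paper: take $L$ to be the hard super letters with $N_u:=N_u'$, define $\red{w}$ and $\redh{u}$ by expanding $[w]$ and $[u]^{N_u}$ in Kharchenko's PBW basis (the paper packages your rewriting procedure into the linear section $\phi$ of the projection), and then conclude by combining the generation statement of Lemma \ref{LemDefnfuv} with the linear independence coming from Theorem \ref{ThmKh}. Your closing paragraph merely makes explicit the ordering check that the paper compresses into the remark that the reductions ``are as stated in Lemma \ref{LemDefnfuv} since $[w]$ is not hard.''
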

\begin{proof}
Let $[L]$ be the set of hard super letters in $A$; then $L\subset\Ly$ is Shirshov closed as mentioned above. By Theorem \ref{ThmKh} the elements $[u_1]^{r_1}[u_2]^{r_2}\ldots [u_t]^{r_t}g$ with $t\in\mathbb N$, $u_i\in L$, $u_1>\ldots>u_t$, $0< r_i<N_{u_i}'$, $g\in\G$, form a $\k$-basis.  We consider the 
 $\k$-linear map 
$$
\phi: A\rightarrow \kX \# \k[\G], \quad [u_1]^{r_1}\ldots[u_t]^{r_t}g\mapsto [u_1]^{r_1}\ldots[u_t]^{r_t}g,
$$
and define $\red{w}:=\phi\bigl([w]\bigr)$ for all $w\in C(L)$, $\redh{u}:=\phi\bigl([u]^{N_u} \bigr)$ for all $u\in D(L)$, where $N_u:=N_u'$. Note that these elements are as stated in Lemma \ref{LemDefnfuv} since $[w]$ is not hard. 
Then we build the ideal $I\subset \kX \# \k[\G]$ like in Definition \ref{DefnIdealLcd} and there is the surjective Hopf algebra map 
$$
(\kX\# \k[\G])/I \rightarrow A,\quad x_i\mapsto a_i,\ g\mapsto g.
$$ 
By Lemma \ref{LemDefnfuv} the residue classes of $[u_1]^{r_1}\ldots[u_t]^{r_t}g$ $\k$-generate $(\kX\# \k[\G])/I$; they are linearly independent because  so are their images. Hence the map is an isomorphism. 
\end{proof}
One immediately gets the following result for Nichols algebras of diagonal type (for the definition of Nichols algebras we refer to \cite{AS-Pointed} or \cite{Helbig-Lift}):
\begin{cor}
Let $\BV$ be a Nichols algebra of diagonal type of a vector space $V$ with basis $X$. Then there is 
a homogeneous ideal $I\subset \kX$ as in Definition \ref{DefnIdealLcd} such that
$$
\BV \cong \kX /I.
$$
\end{cor}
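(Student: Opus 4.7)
The strategy is to apply Theorem \ref{PropIdealCharHopfAlg} to the Radford--Majid bosonization $\BV\#\k[\G]$ and then descend the resulting presentation to $\BV$ itself along the smash-product decomposition.

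First, I would realize $V$ as a Yetter--Drinfel'd module of diagonal type over a group algebra. Choose $\G := \Z^{\theta}$ with generators $g_1,\ldots ,g_{\theta}$ and characters $\chi_j(g_i):=q_{ij}$, where $(q_{ij})$ is the matrix describing the diagonal braiding of $V$ in the basis $X=\{x_1,\ldots,x_{\theta}\}$, putting $x_i$ in $\G$-degree $g_i$ and $\Gh$-degree $\chi_i$. Form the bosonization $H := \BV \# \k[\G]$. Then $H$ is pointed with $G(H)=\G$, each $x_i$ is $(g_i,1)$-skew-primitive and satisfies $g x_i = \chi_i(g) x_i g$; hence $H$ is a character Hopf algebra. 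Theorem \ref{PropIdealCharHopfAlg} then supplies an ideal $J\subset \kX\#\k[\G]$ as in Definition \ref{DefnIdealLcd} with $H \cong (\kX\#\k[\G])/J$.

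Next, I would argue that $J$ can be taken inside the subalgebra $\kX \subset \kX\#\k[\G]$. Since $\BV$ carries an $\N^{\theta}$-grading with $\deg(x_i)=e_i$, this grading extends to $H$ by placing $\k[\G]$ in degree zero; the PBW basis of Theorem \ref{ThmKh} is then homogeneous, with every hard super letter $[u_i]$ lying in $\BV$ and every group element in degree zero. The $\k$-linear section $\phi: H\to \kX\#\k[\G]$ used in the proof of Theorem \ref{PropIdealCharHopfAlg} can therefore be chosen to respect the tensor decomposition $H = \BV \otimes \k[\G]$, so that $\phi(\BV)\subset \kX$. Because $[w]\in\BV$ for every $w\in C(L)$ and $[u]^{N_u}\in\BV$ for every $u\in D(L)$, the reductions $\red{w}=\phi([w])$ and $\redh{u}=\phi([u]^{N_u})$ lie in $\kX$, and all the generators of $J$ belong to $\kX$.

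Finally, I would define $I\subset\kX$ to be the ideal generated by the same elements $[w]-\red{w}$ ($w\in C(L)$) and $[u]^{N_u}-\redh{u}$ ($u\in D(L)$). Since $I$ is $\Gh$-homogeneous, it is stable under the $\G$-action on $\kX$, so $J$ is the extension $I\cdot(\kX\#\k[\G])$ and $(\kX\#\k[\G])/J \cong (\kX/I)\#\k[\G]$. Comparing with $H = \BV\#\k[\G]$ via the identification $x_i\leftrightarrow x_i$, $g\leftrightarrow g$, and restricting to the component with trivial $\k[\G]$-part, yields $\BV\cong\kX/I$; the ideal $I$ is homogeneous because its generators may be chosen $\N^{\theta}$-homogeneous. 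The main obstacle is this descent step: one must ensure that the reductions produced by Theorem \ref{PropIdealCharHopfAlg} carry no genuine group-like factor, which is exactly what the $\N^{\theta}$-grading together with the bosonization decomposition $H = \BV \otimes \k[\G]$ guarantees.
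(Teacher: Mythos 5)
Your argument is correct and is exactly the route the paper intends: the paper offers no written proof (it says the corollary follows ``immediately'' from Theorem \ref{PropIdealCharHopfAlg}), and you supply the missing details --- bosonize to the character Hopf algebra $\BV\#\k[\G]$, apply the theorem, and use the tensor decomposition $\BV\otimes\k[\G]$ together with the $\N^{\theta}$-grading to see that $\phi(\BV)\subset\kX$, so the reductions carry no group part and the ideal descends to a homogeneous ideal of $\kX$. The only point worth flagging is that the $\N^{\theta}$-grading alone does not exclude full-length terms $Ug$ with $g\neq 1$; it is the fact that the super letters lie in $\BV$, so that the PBW monomials without group factor form a basis of $\BV\otimes 1$, that forces $\phi([w])\in\kX$ --- which you do invoke.
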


\section{Application and examples}

We want to investigate the situation in more detail for character Hopf algebras of rank one and two for some fixed Shirshov closed subsets $L\subset\Ly$. Especially we want to treat liftings of Nichols algebras. Therefore we define the following scalars which will guarantee a $\Gh$-graduation:
\begin{defn}\label{DefnLiftCoeffic} Let $L\subset\Ly$. 
Then we define coefficients
 $\mu_u\in\k$ for all $u\in D(L)$, and $\lambda_{w}\in \k$ for all $w\in C(L)$ by 
\begin{align*}
\mu_u=0,\text{ if } g_u^{N_u}=1\text{ or }\chi_u^{N_u}\neq \varepsilon, \qquad\quad
\lambda_{w}=0,\text{ if } g_{w}=1 \text{ or }\chi_{w}\neq \varepsilon,
\end{align*}
and otherwise they can be chosen arbitrarily.
\end{defn}

\subsection{Pointed Hopf algebras of rank one} \label{SectExamplesRankOne}
\begin{prop}
Let $\operatorname{char} \k=p\ge 0$ and $A$ be a character Hopf algebra of rank one. Then either $A\cong \k[x_1]\# \k[\G]$, or
$$
A\cong (\k[x_1]\# \k[\G]) /\bigl( x_1^N-\redh{1} \bigr)
$$
for  $N=\ord q_{11}<\infty$ if $\operatorname{char} \k=0$ resp.~$N=p^k\ord q_{11}<\infty$ with $k\ge 0$ if $\operatorname{char} \k=p>0$ and $\redh{1} \in (\kX\# \k[\G])^{\chi_1^{N}}$ with $\redh{1} \prec_{\{x_1\}} x_1^{N}$.
\end{prop}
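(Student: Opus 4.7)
The plan is to apply Theorem \ref{PropIdealCharHopfAlg} directly, using that rank one makes the combinatorics of Lyndon words trivial. With $\theta = 1$ and $X = \{x_1\}$, the only Lyndon word is $x_1$ itself (since $x_1^n \notin \Ly$ for $n \geq 2$), so $\Ly = \{x_1\}$. The assumption in \S\ref{SubSectHardSupLettKh} that the generator $a_1$ is hard (else $A$ would be generated by $\G$ alone, contradicting rank one, unless $A = \k[\G]$ which is the trivial case subsumed by the first alternative) gives $[L] = \{x_1\}$, a Shirshov closed set.

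Next I would compute the two sets $C(L)$ and $D(L)$ that determine the ideal in Definition \ref{DefnIdealLcd}. The set $C(L)$ consists of words $uv$ with $u, v \in L$ and $u < v$; since the only element of $L$ is $x_1$, no such pair exists, so $C(L) = \emptyset$ and no commutator relations appear. The set $D(L)$ equals $\{x_1\}$ precisely when $N_{x_1} < \infty$, and is empty otherwise. This naturally splits the argument into the two cases of the statement.

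In the case $N_{x_1} = \infty$, the ideal $I$ from Theorem \ref{PropIdealCharHopfAlg} has no generators, so $A \cong \kX \# \k[\G] = \k[x_1] \# \k[\G]$. In the case $N_{x_1} < \infty$, Theorem \ref{ThmKh} gives the asserted form $N = \ord q_{11}$ if $\operatorname{char}\k = 0$, respectively $N = p^k \ord q_{11}$ for some $k \geq 0$ if $\operatorname{char}\k = p > 0$; the ideal $I$ is then generated by the single relation \eqref{ThmPBWCritIdluN}, namely $x_1^N - \redh{1}$, with $\redh{1} \in (\kX\#\k[\G])^{\chi_1^N}$ and $\redh{1} \prec_{\{x_1\}} x_1^N$ by the construction in the proof of Theorem \ref{PropIdealCharHopfAlg}.

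There is no real obstacle here: the work has already been done in Theorem \ref{PropIdealCharHopfAlg} and Theorem \ref{ThmKh}, and rank one simply degenerates the combinatorics of Lyndon words to the trivial case. The only point that deserves a brief check is the vacuousness of $C(L)$, which relies on the strict inequality $u < v$ required in the definition of $C(L)$ and rules out the would-be relation $x_1 \cdot x_1$.
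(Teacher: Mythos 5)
Your proof is correct and follows essentially the same route as the paper: observe that $\Ly = X = \{x_1\}$, so the only Shirshov closed set is $\{x_1\}$, hence $C(L) = \emptyset$, and then invoke Theorem \ref{PropIdealCharHopfAlg} (with Theorem \ref{ThmKh} supplying the value of $N$). The paper's proof is just a terser version of yours; your extra remarks on the case split over $N_{x_1}$ and the vacuousness of $C(L)$ are accurate elaborations, not a different argument.
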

\begin{proof}
We have $\Ly=X=\{x_1\}$. Hence any Shirshov closed $L$ is equal to $\{x_1\}$ and thus $C(L)$ is the empty set. By Theorem \ref{PropIdealCharHopfAlg} we get the claim.
\end{proof}

\begin{cor}\label{CorNichAlgA1}
Let $\operatorname{char} \k=p\ge 0$ and $\BV$ be a Nichols algebra of diagonal type with $V=\k x_1$ a one-dimensional vector space. Then either $\BV\cong \k[x_1]$, or
$$
\BV\cong \k[x_1] /\bigl( x_1^N \bigr)
$$
for  $N=\ord q_{11}<\infty$ if $\operatorname{char} \k=0$ resp.~$N=p^k\ord q_{11}<\infty$ with $k\ge 0$ if $\operatorname{char} \k=p>0$.
\end{cor}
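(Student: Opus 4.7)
The plan is to derive this as a direct application of the preceding corollary in the case of a one-letter alphabet. Taking $X=\{x_1\}$, the free algebra $\kX$ is just the polynomial algebra $\k[x_1]$, and there is only one Lyndon word, namely $x_1$ itself, so $\Ly=X=\{x_1\}$. The only Shirshov closed subset of $\Ly$ is therefore $L=\{x_1\}$, and $C(L)=\emptyset$ since $C(L)$ would have to consist of non-Lyndon concatenations $uv$ of two distinct Lyndon words $u<v$, which do not exist here.

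Applying the preceding corollary, $\BV\cong \kX/I$ for a homogeneous ideal $I\subset\k[x_1]$ whose generators, according to Definition \ref{DefnIdealLcd}, reduce to relations $x_1^{N_1}-\redh{1}$ indexed by $D(L)\subseteq\{x_1\}$. If $D(L)=\emptyset$, i.e.\ $N_1=\infty$, then $I=0$ and $\BV\cong\k[x_1]$; otherwise $N_1=N$ takes exactly the value stated, by the second part of Theorem \ref{ThmKh}.

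The final point in the second case is to show $\redh{1}=0$. Here I would use the $\N$-grading of $\k[x_1]$ by degree: the preceding corollary guarantees that $I$ is homogeneous, so the single generator $x_1^N-\redh{1}$ must be $\N$-homogeneous of degree $N$. On the other hand, the condition $\redh{1}\prec_L x_1^N$ forces $\redh{1}$ to be a $\k$-linear combination of shorter monomials $x_1^k$ with $k<N$, since no super letters larger than $x_1$ exist and no group factors appear (the ideal lives in $\kX$, not in $\kX\#\k[\G]$). Homogeneity of degree $N$ combined with support in degrees strictly less than $N$ then forces $\redh{1}=0$, yielding $\BV\cong \k[x_1]/(x_1^N)$. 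The only minor subtlety I anticipate is making the absence of group factors in the Nichols algebra case explicit when comparing Definition \ref{DefnIdealLcd} (formulated in $\kX\#\k[\G]$) with its specialization for $\BV\subset \kX$; this is immediate from the statement of the preceding corollary, so no genuine obstacle remains.
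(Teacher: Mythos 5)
Your argument is correct, and the reduction to $L=\{x_1\}$, $C(L)=\emptyset$, $D(L)\subseteq\{x_1\}$ with $N$ determined by Theorem \ref{ThmKh} matches the paper exactly. Where you diverge is in the final step $\redh{1}=0$: the paper argues coalgebra-theoretically, observing that $x_1^N$ is a primitive element (by the quantum binomial formula, given the choice of $N$) and that a Nichols algebra has no primitives outside degree one, so $x_1^N$ must already lie in the ideal; you instead use only the $\N$-grading of the defining ideal asserted in the preceding corollary. Both are legitimate one-line appeals to defining properties of $\BV$, and your version has the small advantage of not needing to know that $x_1^N$ is primitive. One inference in your write-up deserves a word of justification: a homogeneous ideal need not have homogeneous generators in general (e.g.\ $(x^2,x^2+x^3)=(x^2)$), so ``$I$ is homogeneous, hence the single generator is homogeneous'' is not automatic. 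In $\k[x_1]$ it is easily repaired: if $\redh{1}\neq 0$, its lowest homogeneous component $\alpha x_1^m$ with $m<N$ would lie in $I=(x_1^N-\redh{1})$, but every nonzero element of that principal ideal has degree at least $N$, a contradiction. With that sentence added, your proof is complete.
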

\begin{proof}
By Theorem \ref{PropIdealCharHopfAlg} the ideal is of the form $\bigl( x_1^N -\redh{1} \bigr)$ with $\redh{1} \in \kX^{\chi_1^{N}}$ and $\redh{1} \prec_{\{x_1\}} x_1^{N}$. Because $x_1^N$ is a primitive element we get $\redh{1}=0$, by definition of a Nichols algebra.
\end{proof}

If $\operatorname{char} \k=p$ not every finite-dimensional pointed Hopf algebra of rank one is a character Hopf algebra, since the group action is not necessarily via characters  \cite{Scherotzke20082889}. But we have: 
 \begin{prop}\emph{\cite[Thm.~1]{Krop2006214}}
If $\operatorname{char} \k=0$, then any finite-dimensional pointed Hopf algebra of rank one is a character Hopf algebra; moreover it is isomorphic to
$$
(\k[x_1]\# \k[\G]) /\bigl( x_1^N-\mu_1(1-g_1^N) \bigr)
$$
with $N=\ord q_{11}<\infty$ and $\mu_1\in \k$ as in Definition \ref{DefnLiftCoeffic}.
\end{prop}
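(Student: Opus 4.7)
The plan is to combine the cited classification of Krop and Radford \cite{Krop2006214} with the previous proposition, then use the Hopf-ideal condition to pin down the remaining datum $\redh{1}$.

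First I would invoke \cite{Krop2006214} to reduce to the character-Hopf setting, and apply the previous proposition to obtain
\[A \cong (\k[x_1]\#\k[\G])/(x_1^N - \redh{1}),\]
where $N = \ord q_{11} < \infty$ (finite since $A$ is finite-dimensional, via Theorem \ref{ThmKh}) and $\redh{1} \in (\kX\#\k[\G])^{\chi_1^N}$ satisfies $\redh{1} \prec_{\{x_1\}} x_1^N$. The latter expands as $\redh{1} = \sum_{k=0}^{N-1} x_1^k p_k$ with $p_k \in \k[\G]$. Matching the $\Gh$-degree $\chi_1^k$ of each summand against the required $\chi_1^N$, and using that $\chi_1(g_1) = q_{11}$ has order $N$ so the order of $\chi_1$ in $\Gh$ is a multiple of $N$, I would conclude that $\chi_1^{N-k} = \varepsilon$ with $0 \le k < N$ forces $k = 0$ when $\chi_1^N = \varepsilon$ and has no solution otherwise. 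In either case $\redh{1}$ lies in $\k[\G]$, and $\redh{1} = 0$ unless $\chi_1^N = \varepsilon$.

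The decisive step is the Hopf-ideal condition. In characteristic zero with $\ord q_{11} = N$ the quantum binomial identity collapses to $\Delta(x_1^N) = x_1^N \otimes 1 + g_1^N \otimes x_1^N$, so the requirement that $(x_1^N - \redh{1})$ generate a Hopf ideal becomes $\Delta(\redh{1}) \equiv \redh{1} \otimes 1 + g_1^N \otimes \redh{1}$ modulo $I \otimes A + A \otimes I$. Since $\redh{1} \in \k[\G]$, both sides already lie in $\k[\G] \otimes \k[\G]$, which injects into the quotient tensor quotient, so the congruence is an honest equality. The $(1,g_1^N)$-skew primitives of $\k[\G]$ being $\k(1 - g_1^N)$ then give $\redh{1} = \mu_1(1 - g_1^N)$ for some $\mu_1 \in \k$. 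The conditions of Definition \ref{DefnLiftCoeffic} are automatic: if $g_1^N = 1$ or $\chi_1^N \neq \varepsilon$, the preceding analysis forces $\redh{1} = 0$, so we can take $\mu_1 = 0$. The main obstacle is precisely this last step: one must argue carefully that the Hopf-ideal condition forces \emph{exact} (not merely mod-$I$) skew-primitivity of $\redh{1}$ in $\k[\G]$, which uses both the quantum-binomial vanishing and the injectivity of $\k[\G] \otimes \k[\G]$ into the quotient tensor quotient.
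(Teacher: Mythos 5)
Your argument is correct, but it is worth noting that the paper itself offers no proof of this proposition: it is stated purely as a citation of Krop--Radford \cite[Thm.~1]{Krop2006214}, for both the claim that rank-one finite-dimensional pointed Hopf algebras in characteristic zero are character Hopf algebras and for the explicit presentation. What you do differently is derive the presentation internally from the paper's own machinery, citing \cite{Krop2006214} only for the character-Hopf-algebra reduction (which indeed cannot be obtained from the paper's results, since it is exactly the statement that the group action is via characters). Your chain of deductions checks out: finite-dimensionality forces $N'_{x_1}<\infty$, hence $N=\ord q_{11}$ by Theorem \ref{ThmKh}; the condition $\redh{1}\prec_{\{x_1\}}x_1^N$ leaves only terms $x_1^k g$ with $k<N$ because there is no super word of length $N$ strictly greater than $x_1^N$ in the one-letter alphabet; the $\chi_1^N$-homogeneity kills all $k>0$ since $\ord\chi_1$ is a multiple of $N$; and the skew-primitivity of $x_1^N$ (quantum binomial formula in characteristic zero) transfers to an exact identity $\Delta(\redh{1})=\redh{1}\otimes 1+g_1^N\otimes\redh{1}$ in $\k[\G]\otimes\k[\G]$ because the group-likes are part of the PBW basis of the quotient, so $\k[\G]\otimes\k[\G]$ injects into the quotient tensor square. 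The classification of $(1,g_1^N)$-skew-primitives in a group algebra then gives $\redh{1}=\mu_1(1-g_1^N)$, with the vanishing conditions of Definition \ref{DefnLiftCoeffic} falling out of the two degenerate cases exactly as you say. This buys a self-contained verification that the Krop--Radford presentation is the specialization of Theorem \ref{PropIdealCharHopfAlg} to rank one, which the paper only asserts implicitly by juxtaposition; the only external input you still need is the first clause of the cited theorem.
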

These are the liftings of the Nichols algebra of Cartan type $A_1$ of Corollary \ref{CorNichAlgA1}.
As  concrete realizations for $\G=\Z/(N),\Z/(N^2)$ we name the following classic examples:

\begin{exs}Let $\operatorname{char} \k=0$.
\begin{enumerate}\item \emph{Taft Hopf algebra}. Let $\G:=\langle g_1\ |\  g_1^N=1\rangle\cong\Z/(N)$ and $\chi_1(g_1):=q\in\k^{\times}$ with $\ord q=N\ge 2$.
$$
 T(q)\ \cong\  \bigl(\k[x_1]\#\k[\G]\bigr)\big/(x_1^N).
$$ 
 \item \emph{Radford Hopf algebra}. Let $\G:=\langle g_1\ |\  g_1^{N^2}=1\rangle\cong\Z/(N^2)$ and $\chi_1(g_1):=q\in\k^{\times}$ with $\ord q=N\ge 2$. 
$$
 r(q)\cong (\k[x_1]\#\k[\G])/(x_1^N-(1-g_1^N)).
$$ 
\end{enumerate}
\end{exs}

\subsection{Pointed Hopf algebras of rank two}\label{SectExamplesRankTwo}
If $X=\{x_1,x_2\}$, then the situation is much more complicated such that we will treat here
 only to the well-known case $L=X$.

More complicated and new examples of pointed Hopf algebras for $L=\{x_1 , x_1x_2 , x_2\}$, 
$\{x_1 , x_1x_1x_2 , x_1x_2 , x_2\}$, etc.,
are found  in \cite{Helbig-PhD, Helbig-Lift} as liftings of Nichols algebras.

\begin{prop}\label{PropExamplesRank2}
Let $\operatorname{char} \k=p\ge 0$. Any character Hopf algebra $A$ of rank 2 with hard super letters $[L]=\{x_1,x_2\}$ is isomorphic to
\begin{align*}
 (\k\la x_1,x_2\ra\#\k[\G])/    (\ [x_1x_2]-\red{12}, \ \ x_1^{N_1}&-\redh{1},\\
                          x_2^{N_2}&-\redh{2}\ ),
\end{align*}
with $\red{12},\redh{1},\redh{2}$ as in Definition \ref{DefnIdealLcd}.
\end{prop}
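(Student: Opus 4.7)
The plan is to specialize Theorem \ref{PropIdealCharHopfAlg} directly; once the sets $L$, $C(L)$, and $D(L)$ are identified for the hypothesis $[L]=\{x_1,x_2\}$, the claimed presentation drops out. By Theorem \ref{PropIdealCharHopfAlg} one has an isomorphism
\[
A\ \cong\ (\k\la x_1,x_2\ra\#\k[\G])/I,
\]
where $I=I_{L,N,c,d}$ is generated as in Definition \ref{DefnIdealLcd} by the elements $[w]-\red{w}$ for $w\in C(L)$ and $[u]^{N_u}-\redh{u}$ for $u\in D(L)$, and where $L\subset\Ly$ is the Shirshov-closed set indexing the hard super letters.

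Since by hypothesis $[L]=\{x_1,x_2\}$, the bijection between $L$ and $[L]$ (Lemma \ref{LemLexicoSupWords}) forces $L=\{x_1,x_2\}=X$, which is trivially Shirshov closed. I next compute $C(L)$ from its definition: I need Lyndon words $w=uv\notin L$ with $u,v\in L$, $u<v$, and $\Sh{w}{u}{v}$. The only candidate pair $(u,v)$ with $u<v$ in $\{x_1,x_2\}$ is $(x_1,x_2)$, and indeed $x_1x_2\in\Ly$ with $\Sh{x_1x_2}{x_1}{x_2}$ by Theorem \ref{ShirChar}. Thus $C(L)=\{x_1x_2\}$. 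Finally $D(L)=\{u\in\{x_1,x_2\}: N_u<\infty\}$, producing the two power relations $x_i^{N_i}-\redh{i}$ precisely for those $i$ with $N_i<\infty$ (with the understanding, consistent with the phrasing of the proposition, that the relation is vacuous if $N_i=\infty$).

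Substituting these three sets into Definition \ref{DefnIdealLcd} yields an ideal generated exactly by $[x_1x_2]-\red{12}$, $x_1^{N_1}-\redh{1}$, and $x_2^{N_2}-\redh{2}$, which is the presentation in the statement. I do not expect any genuine obstacle: everything substantive — the existence and form of the reduction terms $\red{w}$ and $\redh{u}$, the fact that the listed basis elements descend to a basis of the quotient, and the Hopf algebra isomorphism — has already been established in Theorem \ref{PropIdealCharHopfAlg} and Lemma \ref{LemDefnfuv}; the remaining work is purely a bookkeeping identification of $L$, $C(L)$, and $D(L)$ in the rank-two case with $L=X$.
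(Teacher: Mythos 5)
Your proposal is correct and follows exactly the paper's own route: the paper's proof is the one-line observation that $C(L)=\{x_1x_2\}$ and an appeal to Theorem \ref{PropIdealCharHopfAlg}, which is precisely your argument spelled out in more detail (including the identification $L=X$ and the computation of $D(L)$).
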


\begin{proof} Since   $C(L)=\{x_1x_2\}$,  the claim follows by Theorem \ref{PropIdealCharHopfAlg}.
\end{proof}

As an example let us name the Nichols algebra of Cartan type $A_1\times A_1$ (quantum plane) and its liftings, with  the following concrete realizations:

\begin{exs} Let $\operatorname{char}\k=0$.
\begin{enumerate}
\item \emph{Nichols algebra of Cartan type $A_1\times A_1$}. Let $q_{12}q_{21}=1$, and $N_i=\ord q_{ii}\ge 2$, $i=1,2$.
\begin{align*}
\BV\cong \k\la x_1,x_2\ra/\bigl(\
     [x_1x_2], \ x_1^{N_1},\
                x_2^{N_2}\ \bigr).
\end{align*}

 \item \emph{Liftings of Cartan type $A_1\times A_1$}. Let $q_{12}q_{21}=1$, and $N_i=\ord q_{ii}\ge 2$, $i=1,2$.
\begin{align*}
 \bigl(\k\la x_1,x_2\ra\#\k[\G]\bigr)/\bigl(\quad
     [x_1x_2]&-\lambda_{12}(1-g_{1}g_{2}), & x_1^{N_1}&-\mu_1(1-g_1^{N_1}),\\
             &                             & x_2^{N_2}&-\mu_2(1-g_2^{N_2})\quad \bigr).
\end{align*}

\item \emph{Book Hopf algebra}. Let $q\in\k^{\times}$ with $\ord q=N\ge 2$, $\G:=\langle g \ |\ g^N=1\rangle\cong \Z/(N)$, $g_1:=g_2:=g$, and $\chi_1(g):=q^{-1}$, $\chi_2(g):=q$.
$$
h(1,q)\cong \bigl(\k\la x_1,x_2\ra\#\k[\G]\bigr)/\bigl(\ [x_1x_2],\ x_1^{N},\ x_2^{N}\ \bigr).
$$
\item \emph{Quantum groups}. $U_q(\mathfrak{sl}_2)$ and $u_q(\mathfrak{sl}_2)$ of Section \ref{SectionMotivation}.
\end{enumerate}
\end{exs}

\bibliographystyle{plain}
\bibliography{mybib}
\end{document}